\newtheorem{thm}{Theorem}[section]
\newtheorem{cor}[thm]{Corollary}
\newtheorem{lem}[thm]{Lemma}
\newtheorem{prop}[thm]{Proposition}
\theoremstyle{remark}
\newtheorem{rem}[thm]{Remark}
\theoremstyle{definition}
\newtheorem{defn}[thm]{Definition}
\newtheorem{ex}[thm]{Example}
\numberwithin{equation}{section}
\newcommand{\inn}[3]{{_{#1}}{\langle #2 \rangle}{_{#3}}}
\newcommand{\hs}[2]{{\langle #1, #2 \rangle}} 
\newcommand{\mv}{\underline}
\newcommand{\A}{\mathcal{A}}
\newcommand{\B}{\mathcal{B}}
\newcommand{\cL}{\mathcal{L}}
\newcommand{\cK}{\mathcal{K}}
\newcommand{\cO}{\mathcal{O}}
\newcommand{\cH}{\mathcal{H}}
\newcommand{\CP}{\mathbb{CP}}
\newcommand{\WP}{\mathbb{WP}}
\newcommand{\IN}{\mathbb{N}}
\newcommand{\IZ}{\mathbb{Z}}
\newcommand{\IC}{\mathbb{C}}
\newcommand{\IT}{\mathbb{T}}
\newcommand{\IS}{\mathbb{S}}
\newcommand{\Cs}{$C^*$-}
\newcommand{\Span}{\mathrm{Span}}
\newcommand{\Bnd}{\cL(E)}
\newcommand{\BndB}{\cL_B(E)}
\newcommand{\K}{\mathcal{K}(E)}
\newcommand{\KB}{\mathcal{K}_B(E_B)}
\newcommand{\KA}{\mathcal{_A K}(_AE)}
\DeclareMathOperator{\id}{id}
\newcommand{\hot}{\,\widehat{\otimes}}
\newcommand{\ots}{\otimes}
\newcommand{\wt}{\widetilde}
\newcommand{\wh}{\widehat}
\newcommand{\ii}{\rm i}
\begin{document}

%
%
%
%
%
%
%
%
%

\title[]{Pimsner algebras and circle bundles}
\author{Francesca Arici} 
\address{SISSA, Via Bonomea 265, 34136 Trieste, Italy
and I.N.F.N. Sezione di Trieste, Trieste, Italy.}
\email{farici@sissa.it}

\author{Francesco D'Andrea} 
\address{Universit\`a di Napoli ``Federico II'' and I.N.F.N. Sezione di Napoli, Complesso MSA, Via Cintia, 80126 Napoli, Italy.}
\email{francesco.dandrea@unina.it}

\author{Giovanni Landi}
\address{Universit\`{a} di Trieste, Via A.~Valerio~12/1, 34127 Trieste, Italy and I.N.F.N. Sezione di Trieste, Trieste, Italy.}
\email{landi@units.it}

\subjclass{Primary 19K35; Secondary 55R25, 46L08}

\keywords{Pimsner algebras, quantum principal bundles, graded algebras, noncommutative geometry, quantum projective and lens spaces.}

\date{v1 June 2015 ; v2 October 2015}


\begin{abstract}
We report on the connections between noncommutative principal circle bundles, Pimsner algebras and strongly graded algebras. We illustrate several results with the examples of quantum weighted projective and lens spaces and 
$\theta$-deformations.
\end{abstract}

\maketitle

\tableofcontents

\newpage
\parskip 3pt

\section{Introduction}

%

This paper is devoted to Pimsner (or Cuntz-Krieger-Pimsner) algebras, focusing on their connections with noncommutative principal circle bundles as well as with (strongly) graded algebras. 

Pimsner algebras, which were introduced in the seminal work \cite{P97}, provide a unifying framework for a range of important $C^*$-algebras including crossed products by the integers, Cuntz-Krieger algebras \cite{C77, CK80}, and $C^*$-algebras associated to partial automorphisms \cite{E94}. 
Due to their flexibility and wide range of applicability, 
there has been recently an increasing interest in these algebras (see for instance \cite{GG13, RRS15}).
A  related class of algebras, known as generalized crossed products, was independently invented in \cite{AEE}. The two notions coincide in many cases, in particular in those of interest for the present paper. 
Here we will use a more geometrical point of view, showing how certain Pimsner algebras, coming 
from a \emph{self-Morita equivalence bimodule} over a \mbox{\Cs algebra}, can be thought of as algebras of functions on the total space of a noncomutative principal circle bundle, along the lines of \cite{AKL14,DAL14}.

Classically, starting from a principal circle bundle $P$ over a compact topological space $X$, 
an important role is in the associated line bundles. Given any of these,
the corresponding module of sections is a self-Morita equivalence bimodule for the commutative $C^*$-algebra $C(X)$ of continuos functions over $X$. Suitable tensor powers of the (sections of the) bundle are endowed with an algebra structure eventually giving back the $C^*$-algebra $C(P)$ of continuos functions over $P$. This is just a Pimsner algebra construction. By analogy then, one thinks of a self-Morita equivalence bimodule over an arbitrary \mbox{\Cs algebra} as a \emph{noncommutative line bundle} and of the corresponding Pimsner algebra as the `total space' algebra of a principal circle fibration. 

The Euler class of a (classical) line bundle has an important use in the Gysin sequence in complex K-theory, that relates the topology of the base space $X$ to that of the total space $P$ of the bundle. This sequence has natural counterparts in the context of Pimsner algebras, counterparts given by two sequences in KK-theory with natural analogues of the Euler class and a central role played by index maps of canonical classes. 

In order to make this review self-contained, we start in \S\ref{se:1} from recalling the theory of Hilbert modules and Morita equivalences, focusing on those definitions and results that will be needed in the sequel of the paper. Then 
\S\ref{s:2} is  devoted to Pimsner's construction and to the construction of generalized crossed products. This is followed by the six-term exact sequences in KK-theory. We next move in \S\ref{se:3} to noncommutative principal circle bundles and graded algebras and recall how principality of the action can be translated into an algebraic condition on the induced grading. This condition is particularly relevant and it resembles a similar condition appearing in the theory of generalized crossed products.
We then show how all these notions are interconnected and can be seen as different aspects of the same phenomenon. Finally, \S\ref{se:4} is devoted to examples: we illustrate how theta deformed and quantum weighted projective and lens spaces fit into the framework.

\section{Hilbert \mbox{\Cs modules} and Morita equivalence}\label{se:1}
Hilbert \mbox{\Cs modules} are crucially important in modern developments of noncommutative geometry and index theory. They are a generalization of Hilbert spaces where the complex scalars are replaced by a \mbox{\Cs algebra}. From a geometrical point of view, they can be thought of as modules of sections of a noncommutative Hermitian vector bundle. 

We recall 
here some of the definitions and results that we need later on in the paper. 
Our main references for this section are \cite{La95,RW}. 

\subsection{Hilbert \Cs modules}

\begin{defn}
Let $B$ be a  \mbox{\Cs algebra}. A right \emph{pre-Hilbert \Cs module} over $B$ is a right $B$-module $E$ with a $B$-valued Hermitian product 
 $\inn{}{\cdot,\cdot}{B}: E\times E \rightarrow B$ satisfying, for all $\xi, \eta \in E$ and for all $b \in B$, the conditions:
\begin{align*}
& \inn{}{\xi,\eta}{B} =\big( \inn{}{ \eta,\xi }{B} \big)^* \, , \\
& \inn{}{\xi, \eta b}{B} = \inn{}{\xi,\eta}{B} b \, , \\
& \inn{}{\xi,\xi}{B} \geq 0 \, , \quad \mbox{and} \quad 
 \inn{}{\xi, \xi}{B} = 0 \iff \xi = 0 .
 \end{align*}
A  right \emph{Hilbert \Cs module} $E$ over $B$ is a right pre-Hilbert \mbox{\Cs module} which is complete in the norm $\| \cdot \|$ on $E$ defined, via the norm $\| \cdot \|_B$ on $B$, by
$$
 \| \xi \|^2 = \| \inn{}{\xi,\xi}{B} \|_B \; .
$$
Finally, it is said to be \emph{full} if the ideal 
$$
  \inn{}{E,E}{B} := \Span_B \lbrace \inn{}{\xi, \eta}{B} \, \vert \, \xi, \eta \in E \rbrace
$$
is dense in $B$. 
\end{defn}

\noindent
There are analogous definitions for \emph{left} modules, with Hermitian product denoted $\inn{B}{\,,\,}{}$; this is taken to be $B$- linear in the first entry, 

To lighten notations we shall write $\inn{}{\cdot,\cdot}{}=\inn{}{\cdot,\cdot}{B}$ whenever possible and use the name
Hilbert $B$-module to mean a Hilbert \mbox{\Cs module} over the \mbox{\Cs algebra} $B$.
The simplest example of a Hilbert \mbox{$B$-module} is the algebra $B$ itself with respect to the Hermitian product
$$
\inn{}{a,b}{}=a^* b \; .
$$

The module $B^n$ consists of $n$-tuples of elements of $B$, with component-wise operations, and with Hermitian product defined by 
 \begin{equation}
 \label{eq:Bn}
 \inn{}{(a_1, \dots, a_n),(b_1, \dots b_n)}{} = \sum\nolimits_{i=1}^n \inn{}{a_i, b_i}{} \; .
 \end{equation}

Generalizing $B^n$, whenever one has a finite set $\lbrace E_i \rbrace_{i=1}^{n}$ of Hilbert \mbox{$B$-modules}, one can form the direct sum $\bigoplus_{i=1}^{n} E_i$. It is a Hilbert $B$-module in a obvious way (component-wise) with Hermitian product defined as in \eqref{eq:Bn}.

Things become subtler if $\lbrace E_i \rbrace_{i \in I} $ is an infinite collection of Hilbert \mbox{$B$-modules}. Indeed, one needs to replace $\bigoplus_{i \in I} E_i$ by the set of sequences $(\xi_i)$, with $\xi_i \in E_i$ and such that $\sum_{i \in I} \langle \xi_i , \xi_i \rangle$ converges in $B$.
Then for $\xi=(\xi_i)$ and $\eta=(\eta_i)$,
one gets a complete module (hence a Hilbert $B$-module) with the natural inner product:
$$
\langle \xi, \eta \rangle =  \sum\nolimits_{i \in I} \langle \xi_i, \eta_i \rangle
$$

A Hilbert $B$-module $E$ is \emph{finitely generated} if there exists a finite collection $\lbrace \eta_1, \dots, \eta_n \rbrace$ of elements of $E$ such that the $B$-linear span of the $\eta_i$'s is dense in $E$. It is \emph{algebraically} finitely generated if every element $\xi \in E$ is of the form 
$\xi = \sum_{j=1}^n \eta_j  b_j$ for some $b_j$'s in $B$.

A Hilbert $B$-module $E$ is \emph{projective} if it is a direct summand in $B^n$ for some $n$. By \cite[15.4.8]{WO}, 
every algebraically finitely generated Hilbert \mbox{$C^*$-module} over a unital $C^*$-algebra is projective. 

Next, one defines operators between Hilbert modules: let $E, F$ be two Hilbert $B$-modules over the same \mbox{\Cs algebra} $B$.
\begin{defn}
One says that an operator $T: E \rightarrow F$ is \emph{adjointable} if there exists an operator $T^*: F \rightarrow E$ such that
$$
 \inn{}{T\xi, \eta}{} = \inn{}{\xi, T^*\eta}{}, \qquad \mbox{ for all } \quad \xi \in E, \eta \in F \; . \vspace*{-10pt}
$$
\end{defn}

An adjointable operator is automatically $B$-linear and, by the Banach-Steinhaus theorem, bounded. 
The collection of adjointable operators from $E$ to $F$ is denoted $\cL_B(E,F)$.
A bounded $B$-linear operator need not be adjointable --- a simple counterexample is the inclusion $i: I \hookrightarrow B$ of a proper ideal in a unital \mbox{\Cs algebra} --- thus the need for the definition. Clearly, if $T\in \cL_B(E,F)$, then 
$T^*\in \cL_B(F,E)$. In particular, $\BndB:=\cL_B(E,E)$ is a $*$-algebra; it is in fact a \mbox{\Cs algebra} for the 
operator norm. 

There is an important class of operators which is built from `finite rank' operators.
For any $\xi \in F$ and $\eta \in E$ one defines the operator $\theta_{\xi, \eta} : E \rightarrow F$ as
\begin{equation}
\theta_{\xi, \eta} (\zeta) =  \xi  \inn{}{\eta, \zeta}{}, \qquad \mbox{ for all } \quad \zeta \in E \; . 
\end{equation}
Every such $\theta_{\xi, \eta}$ is adjointable, with adjoint $\theta_{\xi, \eta}^* := \theta_{\eta, \xi}: F \rightarrow E$. 
The closed linear subspace of $\cL_B(E,F)$ spanned by 
$\lbrace \theta_{\xi, \eta} \, \vert \, \xi, \eta \in E \rbrace$
is denoted by $\cK_B(E,F)$. In particular $\cK_B(E):= \cK_B(E,E) \subseteq \BndB$; this is a closed ideal, whose elements are referred to as \emph{compact endomorphisms.}
When possible we write $\Bnd=\BndB$ and $\K=\cK_B(E)$.

The \mbox{\Cs algebraic} dual of $E$, denoted by $E^*$, is defined as the space
\begin{equation}\label{dualE}
E^* := \lbrace \phi \in \textup{Hom}_B(E,B) \, | \, \exists \, \xi \in E \mbox{ such that } \phi(\eta) = \langle \xi,\eta \rangle \, \, \forall \eta \in E \rbrace  \, .
\end{equation}
Thus, with $\xi \in E$, if $\lambda_\xi: E \rightarrow B$ is the operator defined by $\lambda_\xi (\eta) = \inn{}{\xi, \eta}{} $, for all $\eta \in E$, 
every element of $E^*$ is of the form $\lambda_\xi$ for some $\xi\in E$.

By its definition, $E^* := \cK_B(E,B)$. One says that a module $E$ is \emph{self-dual} if the \mbox{\Cs algebraic} dual $E^*$ coincides with $\cL_B(E,B)$, i.e. if the module map given by $E \ni \xi \mapsto \lambda_\xi \in \cL_B(E,B)$,
is surjective. If $B$ is unital, then $B^n$ is self-dual for any $n\geq 1$. As a consequence, every finitely generated projective Hilbert \mbox{\Cs module} over a unital \mbox{\Cs algebra} is self-dual as well.

\subsection{Morita equivalence}
Given a right Hilbert $B$-module $E$, by construction, compact endomorphisms act from the left on $E$. 
Then, by defining:
$$
\inn{\K}{\xi,\eta}{}:= \theta_{\xi,\eta},
$$
we obtain a natural $\K$-valued Hermitian product on $E$. Note this is \emph{left} linear over $\K$, that is  $\inn{\K}{T \cdot \xi,\eta}{} = T \cdot (\inn{\K}{\xi,\eta}{})$ for $T\in \K$. 
Thus $E$ is a left Hilbert $\K$-module and by the very definition of $\K$, $E$ is full over $\K$.
One easily checks the compatibility condition
\begin{equation}\label{mekop}
\inn{\K}{\xi ,\eta }{}\zeta = \xi \inn{}{\eta, \zeta}{B}, \qquad \mbox{ for all } \quad \xi, \eta, \zeta \in E.
\end{equation}
In particular, the $B$-valued and $\K$-valued norms coincide \cite[Lem.~2.30]{RW}. 
By its definition, $\K$ acts by adjointable operators on the right \mbox{$B$-module} $E$.
On the other hand, with $b\in B$ and $\xi,\eta,\zeta\in E$, one computes:
$$
\inn{\K}{\xi b,\eta }{}\zeta = (\xi b) \inn{}{\eta, \zeta}{B} = \xi \inn{}{\eta b^*, \zeta}{B} = \inn{\K}{ \xi, \eta b^*}{} \zeta ,
$$
that is, $B$ acts by adjointable operators on the left $\K$-module $E$.

This example motivates the following definitions:
\begin{defn}
Given two \mbox{\Cs algebras} $A$ and $B$,  
a Hilbert \emph{$(A,B)$-bimodule} $E$ is a right Hilbert $B$-module with $B$-valued Hermitian product 
$\inn{}{\ ,\ }{B}$, which is at the same time a left Hilbert $A$-module with $A$-valued Hermitian product 
$\inn{A}{\ , \ }{}$ and such that the Hermitian products are compatible, that is,
\begin{equation} 
\xi \inn{}{\eta, \zeta}{B} = \inn{A}{\xi ,\eta }{}\zeta , \qquad \mbox{ for all } \quad \xi, \eta, \zeta \in E.
\end{equation}
Note that $\inn{}{\ ,\ }{B}$ is right $B$-linear, while $\inn{A}{\ , \ }{}$ is left $A$-linear. 
\end{defn}

 \begin{defn}
 An \emph{$(A,B)$-equivalence bimodule} is a Hilbert $(A,B)$-bimodule $E$ that is full both as a left and as a right Hilbert module 
 and such that 
\begin{equation}\label{coadj}
\inn{}{a \xi, \eta}{B} =  \inn{}{\xi, a^* \eta}{B} \qquad \mbox{and} \qquad \inn{A}{\xi b, \eta }{} = \inn{A}{\xi, \eta b^*}{} , 
\end{equation}
for all $\xi, \eta \in E, a \in A, b\in B$.
If there exist an $(A,B)$-equivalence bimodule one says that the two \mbox{\Cs algebras} $A$ and $B$ are \emph{Morita equivalent}.
 \end{defn}

\noindent
Condition \eqref{coadj} says that
$A$ acts by adjointable operators on $E_B$ --- that is the bimodule $E$ thought of as a right $B$-module ---, with the adjoint of $a\in A$ being its conjugated $a^*$ in the $C^*$-algebra $A$. Similarly $B$ acts by adjointable operators on ${_A}E$.  
From the considerations above on the algebra $\K$ of compact endomorphisms and in particular the compatibility condition \eqref{mekop}, it is not surprising that the algebra $\K$ has a central role for Morita equivalence:

 \begin{prop}[{\cite[Prop.~3.8]{RW}}]
Let $E$ be a full Hilbert $B$-module. Then $E$ is a $(\K,B)$-equivalence bimodule with $\K$-valued Hermitian product given by $\inn{\K}{\xi,\eta}{} = \theta_{\xi,\eta}$. Conversely, if $E$ is an $(A,B)$-equivalence bimodule, then there exists an isomorphism $\phi:A \rightarrow \K$ such that
$$
  \phi \left( \inn{A}{\xi, \eta}{} \right) = \inn{\K}{\xi, \eta}{}, \qquad \mbox{ for all } \quad \xi, \eta \in E .
 $$
  \end{prop}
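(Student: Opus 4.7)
The plan is to prove the two directions separately and rely throughout on the compatibility identity \eqref{mekop}, which is the only real source of interaction between the left and right structures.

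For the first assertion, I would begin by checking that $\inn{\K}{\xi,\eta}{}:=\theta_{\xi,\eta}$ really is a $\K$-valued Hermitian product. Conjugate symmetry is the identity $\theta_{\xi,\eta}^{*}=\theta_{\eta,\xi}$ already noted above; left $\K$-linearity in the first slot follows directly from $T\theta_{\xi,\eta}=\theta_{T\xi,\eta}$ for $T\in\K$, which is an immediate calculation from the definition of $\theta$; and positivity plus non-degeneracy follow from $\|\theta_{\xi,\xi}\|=\|\inn{}{\xi,\xi}{B}\|$ (which itself comes from \eqref{mekop}). The compatibility $\inn{\K}{\xi,\eta}{}\zeta=\xi\inn{}{\eta,\zeta}{B}$ is just the definition of $\theta_{\xi,\eta}$. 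Left fullness is built in, right fullness is the hypothesis, and the two adjointability conditions in \eqref{coadj} are automatic: for $a\in\K$ they reduce to the fact that $\K\subset\cL_B(E)$, and for $b\in B$ one computes $\theta_{\xi b,\eta}=\theta_{\xi,\eta b^{*}}$ directly from the $B$-linearity of $\inn{}{\cdot,\cdot}{B}$.

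For the converse, given an $(A,B)$-equivalence bimodule $E$, I would define $\phi$ on the algebraic ideal $I_{A}=\Span\{\inn{A}{\xi,\eta}{}\}$ by
\[
\phi\Big(\textstyle\sum_{i}\inn{A}{\xi_{i},\eta_{i}}{}\Big):=\textstyle\sum_{i}\theta_{\xi_{i},\eta_{i}},
\]
and then extend by continuity using left fullness. The first thing to verify is well-definedness: if $\sum_{i}\inn{A}{\xi_{i},\eta_{i}}{}=0$ in $A$, then applying both sides to any $\zeta\in E$ and using the compatibility condition yields $\sum_{i}\theta_{\xi_{i},\eta_{i}}(\zeta)=\sum_{i}\xi_{i}\inn{}{\eta_{i},\zeta}{B}=0$, so $\sum_{i}\theta_{\xi_{i},\eta_{i}}=0$ in $\cL_{B}(E)$. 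The $*$-preserving property follows from $\theta_{\xi,\eta}^{*}=\theta_{\eta,\xi}$ and the analogous symmetry of $\inn{A}{\cdot,\cdot}{}$; multiplicativity comes from the computation
\[
\theta_{\xi,\eta}\theta_{\zeta,w}(v)=\xi\inn{}{\eta,\zeta\inn{}{w,v}{B}}{B}=\xi\inn{}{\eta,\zeta}{B}\inn{}{w,v}{B}=\theta_{\xi\inn{}{\eta,\zeta}{B},w}(v),
\]
combined with the identity $\inn{A}{\xi,\eta}{}\inn{A}{\zeta,w}{}=\inn{A}{\xi\inn{}{\eta,\zeta}{B},w}{}$, which itself is obtained by applying both sides to an arbitrary $v\in E$ and using \eqref{mekop} twice.

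Surjectivity of $\phi$ onto $\K$ is then automatic, since the image already contains a dense subset by the definition of $\K$. For injectivity, I would argue that $\phi$ extends, by continuity, to a representation $A\to\cL_{B}(E)$ implementing the left $A$-action: for $a=\lim_{n}\sum_{i}\inn{A}{\xi_{i}^{(n)},\eta_{i}^{(n)}}{}$ one has $\phi(a)\zeta=a\cdot\zeta$ for all $\zeta\in E$. If $\phi(a)=0$ then $a\cdot\zeta=0$ for every $\zeta$, whence $a\inn{A}{\zeta,\eta}{}=\inn{A}{a\zeta,\eta}{}=0$; left fullness of $E$ (together with a standard approximate-identity argument in $A$) then forces $a=0$. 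Having an injective $*$-homomorphism between $C^{*}$-algebras gives isometry for free, so $\phi$ extends to the claimed isomorphism.

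The main technical obstacle is the well-definedness and continuous extension step: one must pass from the algebraic ideal $I_{A}$ to all of $A$ while keeping track of the identity $\phi(a)\zeta=a\cdot\zeta$, and the cleanest way is to interpret $\phi$ from the outset as the homomorphism induced by the left action of $A$ on $E$, and then verify it takes values in $\K$ and agrees with $\theta_{\xi,\eta}$ on generators.
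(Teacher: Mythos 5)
Your proof is correct and follows essentially the same route as the paper, which establishes the forward direction in exactly this way in the paragraph preceding the proposition (the computation $\theta_{\xi b,\eta}=\theta_{\xi,\eta b^*}$, the compatibility condition \eqref{mekop}, and the coincidence of the two norms) and simply cites \cite[Prop.~3.8]{RW} for the converse. Your argument for the converse is the standard one from that reference, and your closing remark --- that one should define $\phi$ from the outset as the $*$-homomorphism $A\to\cL_B(E)$ induced by the left action (adjointable by \eqref{coadj}, hence contractive) and only then check it lands in $\K$ and agrees with $\theta_{\xi,\eta}$ on generators --- is indeed the clean way to dispose of the well-definedness and extension issues, provided you establish faithfulness of the left action (via left fullness) before using it to deduce identities in $A$ from identities of operators on $E$.
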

Thus, two \mbox{\Cs algebras} $A$ and $B$ are Morita equivalent if and only if $A \simeq \KB$ for a full right Hilbert $B$-module $E_B$, or equivalently, if and only if $B \simeq \KA$ for a full left Hilbert $B$-module ${}_AE$. 

In fact, Morita equivalence is an equivalence relation, with transitivity implemented by the interior tensor product of bimodules. Given a Hilbert $(A,B)$-bimodule $E$ and a Hilbert $(B,C)$-bimodule $F$, one can form a Hilbert $(A,C)$-bimodule, the \emph{interior tensor product bimodule}. We do not dwell upon the details of the construction here while referring to \cite[\S3.2]{RW} for instance. We shall illustrate a particular case of the construction later on.

\subsection{Self-Morita equivalence bimodules}\label{se:smeb}
In the present paper we are interested in \emph{noncommutative line bundles}, that is the analogue of 
modules of continuous sections of a line bundle over a topological space. 
We are then naturally led to the following definition.

\begin{defn}\label{de:sme}
A \emph{self-Morita equivalence bimodule} for $B$ is a pair $(E,\phi)$ with
$E$ a full right Hilbert $B$-module $E$ and 
$\phi: B \rightarrow \K$ an isomorphism.
\end{defn}

The prototypical commutative example of a self-Morita equivalence bimodule is provided by $B=C(X)$, the \mbox{\Cs algebra} of continuous functions on a compact topological space,  $E$ the $C(X)$-module of sections of a Hermitian
line bundle $\mathcal{L} \rightarrow X$ and $\phi$ the trivial representation.

If $(E, \phi)$ is a self-Morita equivalence bimodule over $B$, the dual $E^*$ as defined in \eqref{dualE}, can be made into a self-Morita equivalence bimodule 
over $B$ as well. Firstly, $E^*$ is given the structure of a (right) Hilbert $C^*$-module over $B$ via $\phi$. 
Recall that elements of $E^*$ are of the form $\lambda_\xi$ for some $\xi\in E$, with $\lambda_\xi (\eta) = \inn{}{\xi, \eta}{} $, for all $\eta \in E$. 
The right action of $B$ on $E^*$ is given by
$$
\lambda_\xi \, b := \lambda_\xi \circ \phi(b) = \lambda_{\phi(b)^* \xi} \, ,
$$
the second equality being easily established. The $B$-valued Hermitian product on $E^*$ uses the left $\K$-valued Hermitian product on $E$:
$$
\langle \lambda_\xi, \lambda_\eta \rangle := \phi^{-1}(\theta_{\xi, \eta})  \, ,
$$
and $E^*$ is full as well. Next, define a $*$-homomorphism $\phi^* : B \to \cL(E^*)$ by 
$$
\phi^*(b)(\lambda_\xi) := \lambda_{\xi \cdot b^*} , 
$$
which is in fact an isomorphism $\phi^*: B \rightarrow \cK(E^*)$. 
Thus, the pair $(E^*,\phi^*)$ is a self-Morita equivalence bimodule over $B$, according to Definition~\ref{de:sme}. 

We need to recall the notion of interior tensor product $E\hot_\phi E$ of $E$ with itself over $B$.
%
As a first step, one considers the algebraic tensor product 
$E\otimes_{\rm alg}E$.
It has natural right $B$-module structure given by 
$$
(\xi \ots \eta) b = \xi \ots (\eta b) \, , \qquad \mbox{for}  \quad \xi, \eta \in E \, , \quad b \in B \, ,
$$
and a $B$-valued Hermitian product given, on simple tensors, by
\begin{equation}
\label{eq:interprod}
\inn{}{\xi_1 \ots \xi_2, \eta_1 \ots \eta_2}{} = \inn{}{\xi_2, \phi( \inn{}{\xi_1,\eta_1}{} ) \, \eta_2}{}
\end{equation}
and extended by linearity. This is well defined and has all required properties; in particular, the null space 
$N=\{\zeta \in E\otimes_{\rm alg} E \, ;\,  \inn{}{\zeta, \zeta}{} = 0\}$ 
is shown to coincide with the subspace generated by elements of the form
\begin{equation}\label{ns}
\xi b \ots \eta - \xi \ots \phi(b) \eta \, , \qquad \mbox{for}  \quad \xi, \eta \in E \, , \quad b \in B \, . 
\end{equation}%
One then takes $E \otimes_\phi E := (E\otimes_{\rm alg}E) / N$ and defines $E \hot_\phi E$ to be the (full) Hilbert 
\mbox{$B$-module} obtained by completing with respect to the norm induced by sthe restriction of \eqref{eq:interprod} to the quotient $E \otimes_\phi E$. 
We shall simply write $\xi \ots \eta$ to mean the element $\xi \ots \eta + N$ in $E \hot_\phi E$.

The module $E \hot_\phi E$ is itself a self-Morita equivalence bimodule over $B$. To sketch  how this is the case, we need some additional facts (\emph{cf.} \cite[Ch.4]{La95}). 

For   any $T\in\cL(E)$, the operator defined on simple tensors by $\xi \ots \eta \mapsto T(\xi) \ots \eta$ extends to a well-defined bounded operator on $E \hot_\phi E$
denoted $T \ots \id =: \phi_*(T)$. It is adjointable with adjoint given by $T^* \ots \id$. 
Next, for   $\xi \in E$, the equation $S_\xi(\eta) = \xi \ots \eta$, with $\eta\in E$, defines an element 
$S_\xi \in \cL_B(E, E \hot_\phi E)$ whose adjoint is $S_\xi^*(\eta \ots \zeta) = \phi (\hs{\xi}{\eta}) \zeta$,
for all  $\xi, \eta, \zeta \in E$. Finally, for all  $b\in B$ and $\xi,\eta,\zeta_1, \zeta_2$, one computes:
\begin{align*}
S_\xi \phi(b) S_\eta^* (\zeta_1 \ots \zeta_2) & = S_\xi \phi(b) \phi(\hs{\eta}{\zeta_1}) \zeta_2
= \xi \ots \phi(b \hs{\eta}{\zeta_1})\zeta_2 \\
& = \xi b \hs{\eta}{\zeta_1} \ots \zeta_2 = \theta_{\xi b, \eta}(\zeta_1) \ots \zeta_2 = \phi_*(\theta_{\xi b, \eta}) (\zeta_1 \ots \zeta_2) 
\end{align*}
Thus
$$
\phi_*(\theta_{\xi b, \eta}) = S_\xi \phi(b) S_\eta^* ,
$$
which is in $\cK(E \hot_\phi E)$ since $\phi(b)\in \K$. In fact, since $\phi$ is non degenerate, it follows from this (by using an approximate unit for $B$ in general) that $\phi_*(\theta_{\xi, \eta}) = S_\xi S_\eta^*$, and since $\K$ is generated by elements of the form $\theta_{\xi, \eta}$, it follows that 
$\phi_*(\K) \subset \cK(E \hot_\phi E)$. In fact, since $\phi$ is an isomorphism, $\phi_*$ is an isomorphism as well \cite[Prop.~4.7]{La95}.  In particular, generating elements of $\cK(E \hot_\phi E)$ of the form 
$\theta_{\xi_1 \ots \xi_2, \eta_1 \ots \eta_2}$ can be written as
$$
\theta_{\xi_1 \ots \xi_2, \eta_1 \ots \eta_2} = \phi_* ( \theta_{\xi_1 b, \eta_1} ),
$$
with $b\in B$ uniquely defined by $\phi(b)=\theta_{\xi_2 , \eta_2}$. 
\begin{prop}
The isomorphism
$$
\phi^{(2)} := \phi_* \circ \phi : B \to \cK(E \hot_\phi E)
$$
realizes the Hilbert $B$-module $E \hot_\phi E$ as a self-Morita equivalence over $B$. 
\end{prop}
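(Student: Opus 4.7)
The plan is to verify the two conditions of Definition~\ref{de:sme} for the pair $(E \hot_\phi E, \phi^{(2)})$: first, that $E \hot_\phi E$ is full as a right Hilbert $B$-module, and second, that $\phi^{(2)} : B \to \cK(E \hot_\phi E)$ is an isomorphism. I do not expect any serious obstacle, as the isomorphism statement is immediate from pieces already assembled, and fullness reduces to a short computation together with the standard $C^*$-algebraic identity $B = \overline{B\cdot B}$.

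The isomorphism is a bookkeeping consequence of the preceding discussion. Since $(E,\phi)$ is a self-Morita equivalence bimodule, $\phi : B \to \K$ is an isomorphism by hypothesis. The paragraph above the statement, building on the identity $\phi_*(\theta_{\xi,\eta}) = S_\xi S_\eta^*$ together with \cite[Prop.~4.7]{La95}, already shows that $\phi_*$ restricts to an isomorphism $\K \to \cK(E \hot_\phi E)$. Composing these two yields that $\phi^{(2)} = \phi_* \circ \phi$ is an isomorphism of $B$ onto $\cK(E \hot_\phi E)$.

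For fullness I would unfold \eqref{eq:interprod} on simple tensors:
\begin{equation*}
\hs{\xi_1 \ots \xi_2}{\eta_1 \ots \eta_2} = \hs{\xi_2}{\phi(\hs{\xi_1}{\eta_1})\, \eta_2}.
\end{equation*}
Since $E$ is full and $\phi$ is an isomorphism onto $\K$, the operators $\phi(\hs{\xi_1}{\eta_1})$ range over a dense subset of $\K$ as $\xi_1, \eta_1$ vary; in particular every $\theta_{\alpha,\beta}$ is approximated by such operators. Combining this with the compatibility $\theta_{\alpha,\beta}\eta_2 = \alpha \hs{\beta}{\eta_2}$ and the right $B$-linearity of the inner product, the closure of $\hs{E \hot_\phi E}{E \hot_\phi E}$ contains all products of the form $\hs{\xi_2}{\alpha}\hs{\beta}{\eta_2}$ with $\alpha, \beta, \xi_2, \eta_2 \in E$. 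By fullness of $E$ each factor is dense in $B$, and since $B = \overline{B\cdot B}$ (every element of a $C^*$-algebra is a linear combination of squares of positive elements), these products span a dense subspace of $B$, giving fullness of $E \hot_\phi E$ and completing the verification.
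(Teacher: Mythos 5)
Your proposal is correct and follows essentially the same route as the paper: the isomorphism statement is obtained, exactly as in the paragraph preceding the proposition, by composing $\phi : B \to \K$ with the isomorphism $\phi_* : \K \to \cK(E \hot_\phi E)$ established via $\phi_*(\theta_{\xi,\eta}) = S_\xi S_\eta^*$ and \cite[Prop.~4.7]{La95}. Your explicit verification of fullness of $E \hot_\phi E$ (which the paper only asserts parenthetically when defining the interior tensor product) is a sound and welcome addition, correctly using fullness of $E$, the identity $\theta_{\alpha,\beta}\eta = \alpha\hs{\beta}{\eta}$, and $\overline{B\cdot B}=B$.
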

\noindent
The construction can be iterated and, for $n>0$, one gets the $n$-fold interior tensor power of $E$ over $B$, 
$$
E^{\hot_{\phi} n}:=
 E \hot_\phi E \hot_\phi \cdots \hot_\phi E , \qquad  n\textup{-factors} ;
 $$ 
again a self-Morita equivalence bimodule over $B$.

\begin{rem}
One could generalize the previous construction and consider for a \Cs algebra $B$, the collection self-Morita equivalence bimodules over $B$ up to unitary equivalence. This has a natural group structure with respect to the interior tensor product.
The inverse of the self-Morita equivalence bimodule $(E,\phi)$ is the dual self-Morita equivalence bimodule $(E^*,\phi^*)$.
Thinking of self-Morita equivalence bimodules as line bundles, this group is 
the \emph{Picard group} of $B$, denoted $\mathrm{Pic}(B)$, in analogy with the classical  
Picard group of a space, which is the group of
isomorphism classes of line bundles with group operation given by the tensor product.
It was shown in \cite{AE97} that the Picard group of a commutative unital \Cs algebra $B=C(X)$ is the semi-direct product of the classical Picard group of $X$ with the group of automorphisms of the algebra $B$, which is the same as the group of homeomorphisms of $X$.
\end{rem}

\subsection{Frames}\label{frames}
Le $A,B$ be two unital $C^*$-algebras, and let 
$E$ be a finitely generated 
$(A,B)$-equivalence bimodule.
Since $A \simeq \K$, there exists a finite collection of elements $\eta_1, \dots, \eta_n \in E$ 
with the property that 
$$ 
\sum\nolimits_j \inn{A}{\eta_j, \eta_j}{} = 1_A.
$$
Equivalently, by using the isomorphism $\phi: A \rightarrow \K$, this means that \
\begin{equation}
\label{eq:idK}
\sum\nolimits_j \, \theta_{\eta_j, \eta_j} = 1_{\K}.
\end{equation}
As a consequence, one can reconstruct any element of $\xi \in E$ as
\begin{equation}\label{recon}
\xi = \sum\nolimits_j  \eta_j \inn{}{\eta_j, \xi}{B}.
\end{equation}
This motivates the following definition \cite{R10}.

\begin{defn}
Let $B$ be a unital \Cs algebra. A \emph{finite standard module frame} for the right Hilbert $B$-module $E$ is a finite family of elements 
$\lbrace \eta_i \rbrace_{j=1}^{n}$ of $E$ such that, for all $\xi \in E$, the reconstruction formula \eqref{recon}
is true. 
\end{defn}

\begin{rem}
More generally, one could consider frames with countable elements, with \eqref{recon} replaced by a series convergent in $E$, or equivalently \eqref{eq:idK} replaced by the condition that
the series $\sum_j\theta_{\eta_j,\eta_j}$ is strictly convergent to the unit of $\Bnd$ ($\K$ need not be unital). We refer to \cite{FL02} for details. 
\end{rem}

The existence of a finite frame is a geometrical condition. Indeed, whenever one has a right Hilbert  $B$-module $E$ with a finite standard module frame, $E$ is algebraically finitely generated and projective as a right module, with projectivity following from the fact that the algebra $B$ is unital, with the frame explicitly providing 
a projection for $E$. Indeed,
the matrix $p = (p_{jk})$ with $p_{jk} = \inn{}{\eta_j, \eta_k}{B}$ 
is a projection in the matrix algebra $\mathrm{M}_n(B)$.
By construction $(p_{jk})^* = p_{kj}$ and, using \eqref{recon},
\begin{align*}
(p^2)_{jl} & = \sum\nolimits_k \hs{\eta_j}{\eta_k}_B \hs{\eta_k}{\eta_l}_B \\  
& =\sum\nolimits_k \hs{\eta_j}{\eta_k \hs{\eta_k}{\eta_l}_B}_B 
= \hs{\eta_j}{\eta_l}_B = p_{jl} \, .
\end{align*}
This establishes the finite right $B$-module projectivity of $E$ with the isometric identification $E \simeq p B^n$. 
Furthermore, $E$ is self-dual for its Hermitian product.

More generally, $E$ is finitely generated projective whenever there exist two finite sets $\lbrace \eta_i \rbrace_{i=1}^{n}$ and $\lbrace \zeta_i \rbrace_{i=1}^{n}$ of elements of $E$ with the property that
\begin{equation} \label{eq:dualframes}
\sum\nolimits_j \inn{\K}{\eta_j, \zeta_j}{} = 1_{\K}.
\end{equation}
Then, any element $\xi \in E$ can be reconstructed as 
$$
\xi = \sum\nolimits_j \eta_j \inn{}{\zeta_j, \xi}{B},
$$
and the matrix with entries given by $\mathbf{e}_{jk}=\inn{}{\zeta_j, \eta_k}{B}$ is an idempotent in $\mathrm{M}_n(B)$, 
$(\mathbf{e}^2)_{jk}=\mathbf{e}_{jk}$, and $E\simeq \mathbf{e}B^n$ as a right $B$-module.

\section{Pimsner algebras and generalized crossed products}\label{s:2}
In this section, we recall the construction of the Pimsner algebra \cite{P97}
(a ring-theoretic version is discussed in \cite{BB14}).
We also review the notion of generalized crossed product of a \mbox{\Cs algebra} by a Hilbert bimodule, that was introduced independently in \cite{AEE}. The two notions are related and in the case of a self-Morita equivalence they actually coincide.

\subsection{The Pimsner algebra of a self-Morita equivalence}
In his breakthrough paper \cite{P97}, Pimsner associates a universal \mbox{\Cs algebra} to every pair $(E, \phi)$, with $E$ a right Hilbert $B$-module  for a \mbox{\Cs algebra} $B$ and $\phi: B \rightarrow \Bnd$ is an isometric $*$-homomorphism. His work was later generalized by Katsura \cite{Ka04}, who removed the injectivity assumption on $\phi$.
 
Guided by a geometric approach coming from principal circle bundles, we shall not work in full generality, but rather under the assumption that the pair $(E,\phi)$ is a self-Morita equivalence bimodule for $B$. Things simplify considerably and the Pimsner algebra is represented on a Hilbert module \cite{AKL14}.

Given a self-Morita equivalence bimodule $(E,\phi)$ for the \mbox{\Cs algebra} $B$, in \S\ref{se:smeb} we described the interior tensor product 
$E \hot_\phi E$, itself a self-Morita equivalence bimodule and, more generally, the tensor product module $E^{\hot_{\phi} n}$,  for $n>0$. To lighten notation, we denote
$$
E^{(n)} := \begin{cases} E^{\hot_{\phi} n} & n>0 \\
B & n=0 \\
(E^*)^{\hot_{\phi^*} (-n)} & n<0 
\end{cases} \, \, .
$$
Clearly, $E^{(1)}=E$ and $E^{(-1)}=E^*$. 
From the definition of these Hilbert $B$-modules, one has isomorphisms
$$
\cK(E^{(n)}, E^{(m)}) \simeq E^{(m-n)} \, .
$$
Out of them, one constructs the Hilbert $B$-module $\mathcal{E}_{\infty}$ as a direct sum:
$$
\mathcal{E}_{\infty}:= \bigoplus_{n \in \IZ} E^{(n)} \,,
$$ 
which could be referred to as a \emph{two-sided Fock module}. As on usual Fock spaces, one defines  
creation and annihilation operators.
Firstly, for each $\xi \in E$ one has a bounded adjointable operator (a creation operator) 
$S_\xi : \mathcal{E}_\infty \to \mathcal{E}_\infty$,
shifting the degree by $+1$, defined on simple tensors by:
\begin{align*}
S_\xi(b) & := \xi \cdot b \, , & b \in B  \, , \\
S_\xi(\xi_1 \ots \cdots \ots \xi_n) & := \xi \ots \xi_1 \ots \cdots \ots \xi_n \, , & n>0 \, , \\
S_\xi(\lambda_{\xi_1} \ots \cdots \ots \lambda_{\xi_{-n}}) & := \lambda_{\xi_2 \cdot \phi^{-1}(\theta_{\xi_1,\xi})} 
\ots \lambda_{\xi_3} \ots \cdots \ots \lambda_{\xi_{-n}}  \, , & n<0  \, .
\end{align*}
\noindent
The adjoint of $S_\xi$ (an annihilation operator) is given by $S_{\lambda_\xi} := S_\xi^* : \mathcal{E}_\infty \to \mathcal{E}_\infty$:
\begin{align*}
S_{\lambda_\xi}(b) & := \lambda_\xi \cdot b \, , & b \in B  \, ,  \\
S_{\lambda_\xi}(\xi_1 \ots \dots \ots \xi_n) & := \phi(\inn{}{\xi,\xi_1}{})\xi_2 \ots \xi_3 \ots \cdots \ots \xi_n \, , & n>0 \, , \\
S_{\lambda_\xi}(\lambda_{\xi_1} \ots \dots \ots \lambda_{\xi_{-n}}) & := \lambda_\xi \ots \lambda_{\xi_1} \ots \cdots \ots \lambda_{\xi_{-n}} \, , & n<0 \, ;
\end{align*}
In particular, $S_\xi(\lambda_{\xi_1}) = \phi^{-1}(\theta_{\xi, \xi_1}) \in B$ and $S_{\lambda_\xi}(\xi_1) = \inn{}{\xi,\xi_1}{} \in B$.

\begin{defn}
The \emph{Pimsner algebra} $\cO_E$ of the pair
$(E,\phi)$ is the smallest $C^*$-subalgebra of $\cL_B(\mathcal{E}_\infty)$ 
containing all creation operators $S_\xi$ for $\xi \in E$. 
\end{defn}

\noindent
There is an injective $*$-homomorphism $i : B \to \cO_E$. This is induced by 
the injective $*$-homomorphism $\phi : B \to \cL_B(\mathcal{E}_\infty)$ defined by
\begin{align*}
\phi(b)(b') & := b \cdot b' \, , \\ 
\phi(b)(\xi_1\otimes\cdots\otimes \xi_n) & := \phi(b)(\xi_1) \ots \xi_2 \otimes\cdots\otimes \xi_n \, , \\
\phi(b)(\lambda_{\xi_1} \otimes\cdots\otimes \lambda_{\xi_n}) & := \phi^*(b)(\lambda_{\xi_1}) \ots \lambda_{\xi_2} \otimes\cdots\otimes \lambda_{\xi_n} \\ & \:= \lambda_{\xi_1 \cdot b^*} \ots \lambda_{\xi_2} \otimes\cdots\otimes \lambda_{\xi_n} \, ,
\end{align*}
and whose image is in the Pimsner algebra $\cO_E$. 
In particular, for all $\xi,\eta \in E$ it holds that $S_\xi S_{\eta}^* = i(\phi^{-1}(\theta_{\xi,\eta}))$, that is the operator $S_\xi S_{\eta}^*$ on $\mathcal{E}_\infty$ is right-multiplication by the element $\phi^{-1}(\theta_{\xi,\eta})\in B$.

A Pimsner algebra is universal in the following sense \cite[Thm.~3.12]{P97}:

\begin{prop}\label{t:unipro}
Let $C$ be a $C^*$-algebra and $\sigma : B \to C$ a \mbox{$*$-homomorphism}. Suppose  
there exist elements $s_\xi \in C$ such that,
for all $\xi, \eta \in E$, $b \in B$ and $\alpha, \beta \in \IC$ it holds that:
\begin{align*}
& \alpha s_\xi + \beta s_\eta = s_{\alpha\xi + \beta \eta}  , \\
&  s_{\xi} \sigma (b) =s_{\xi b} \qquad \mbox{and} \qquad \sigma(b) s_{\xi} = s_{\phi(b)(\xi)} , \\
& s^*_\xi s_\eta = \sigma (\hs{\xi}{\eta}) , \\
& s_\xi s_\eta^* = \sigma\big(\phi^{-1}(\theta_{\xi,\eta}) \big)  . 
\end{align*}
Then, there exists a unique $*$-homomorphism $\wt{\sigma}: \cO_E \rightarrow C$ 
with the property that $\wt{\sigma}(S_\xi) = s_\xi$ for all $\xi \in E$.
\end{prop}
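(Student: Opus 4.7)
This is automatic: by the very definition $\cO_E$ is the smallest $C^*$-subalgebra of $\cL_B(\mathcal{E}_\infty)$ containing the set $\{S_\xi:\xi\in E\}$, so the prescription $\wt\sigma(S_\xi)=s_\xi$ already fixes $\wt\sigma$ on a dense $*$-subalgebra, and continuity forces uniqueness on all of $\cO_E$.

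\textbf{Construction on a dense $*$-subalgebra.} Let $\A_0\subset\cO_E$ be the $*$-subalgebra generated by $\{S_\xi:\xi\in E\}$. Because of the two relations $S_\xi^*S_\eta=i(\hs{\xi}{\eta})$ and $S_\xi S_\eta^*=i(\phi^{-1}(\theta_{\xi,\eta}))$, together with the surjectivity of $\phi$, the subalgebra $\A_0$ contains $i(B)$ and every element of $\A_0$ admits a normal form as a finite linear combination of monomials $S_{\xi_1}\cdots S_{\xi_n}S_{\eta_m}^*\cdots S_{\eta_1}^*$. My plan is to set
$$
\wt\sigma\bigl(S_{\xi_1}\cdots S_{\xi_n}S_{\eta_m}^*\cdots S_{\eta_1}^*\bigr) := s_{\xi_1}\cdots s_{\xi_n}s_{\eta_m}^*\cdots s_{\eta_1}^*.
$$
The four hypotheses on $\{s_\xi\}$ match, term by term, the algebraic identities that are used in producing the normal form, so this map is well defined on $\A_0$ and is a $*$-homomorphism into $C$.

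\textbf{Extension by continuity.} The key remaining point is that $\wt\sigma$ is contractive on $\A_0$ for the $C^*$-norm inherited from $\cO_E$, and therefore extends to the completion. For this I would invoke the gauge action: the decomposition $\mathcal{E}_\infty=\bigoplus_n E^{(n)}$ carries a strongly continuous $\IT$-action by scalars $z^n$ on the $n$-th summand, whose adjoint action produces a continuous $\IT$-action $\gamma$ on $\cO_E$ with $\gamma_z(S_\xi)=z\,S_\xi$. Averaging yields a \emph{faithful} conditional expectation $\Phi:\cO_E\to\cO_E^{\gamma}$ onto the fixed-point subalgebra, which is the norm closure of the direct limit $\varinjlim_n\cK(E^{(n)})$. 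On $\A_0\cap\cO_E^\gamma$ the map $\wt\sigma$ is just $\sigma$ transported along the isomorphism $\phi^{(n)}:B\to\cK(E^{(n)})$, and so is contractive. To promote contractivity to all of $\A_0$ one passes to $C\ots C(\IT)$: the elements $s_\xi':=s_\xi\ots z$ still satisfy the four given relations, hence define a $*$-homomorphism $\wt\sigma':\A_0\to C\ots C(\IT)$ that intertwines $\gamma$ with the canonical $\IT$-action on the second factor. Integrating against the Haar measure of $\IT$ on the target gives a conditional expectation compatible with $\Phi$, and comparing norms yields $\|\wt\sigma(a)\|\le\|a\|$ for every $a\in\A_0$. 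The map therefore extends to a unique $*$-homomorphism $\wt\sigma:\cO_E\to C$.

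\textbf{Main obstacle.} The only non-formal ingredient is the boundedness argument: the algebraic relations on the $S_\xi$ do not by themselves distinguish the concrete Fock $C^*$-norm from other $C^*$-norms on $\A_0$, and what makes the estimate work is the faithfulness of $\Phi$ on $\cO_E$, which in turn rests on the injectivity of $\phi$ (automatic here because $(E,\phi)$ is an honest self-Morita equivalence). Everything else in the proof is bookkeeping with the normal form.
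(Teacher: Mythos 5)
The paper itself does not prove this proposition; it is quoted from \cite[Thm.~3.12]{P97}, so there is no in-text argument to compare yours against. Judged on its own terms, your proposal assembles the right ingredients (gauge action, averaged conditional expectation, collapse of the degree-zero part onto $\sigma(B)$ via the covariance relation $s_\xi s_\eta^*=\sigma(\phi^{-1}(\theta_{\xi,\eta}))$), but the logical organization leaves a genuine gap at the crucial step, namely well-definedness. You declare $\wt{\sigma}$ on monomials $S_{\xi_1}\cdots S_{\xi_n}S^*_{\eta_m}\cdots S^*_{\eta_1}$ and assert that the four hypotheses ``match, term by term, the identities used in producing the normal form,'' hence the map is well defined on $\A_0$. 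But an element of $\A_0\subset\cL_B(\mathcal{E}_\infty)$ may admit many expressions as combinations of monomials, and the listed relations are only \emph{some} of the identities satisfied by the concrete operators $S_\xi$ on the Fock module; well-definedness is precisely the assertion that no further relations hold, which is the entire content of the theorem. The same circularity infects the later step where you ``define'' $\wt{\sigma}':\A_0\to C\ots C(\IT)$ from the elements $s_\xi\ots z$. Moreover, the gauge-invariant uniqueness mechanism (equivariance, injectivity on the fixed-point algebra, faithfulness of the averaged expectation) proves \emph{injectivity of an already bounded $*$-homomorphism defined on a $C^*$-algebra}; it does not by itself produce the norm estimate $\|\wt{\sigma}(a)\|\le\|a\|$ for a map given only on a dense $*$-subalgebra, so ``comparing norms'' does not close as stated.

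The standard repair keeps all your ideas but reverses the direction of the argument. Since $s_\xi^*s_\xi=\sigma(\hs{\xi}{\xi})$ forces $\|s_\xi\|\le\|\xi\|$ in every representation, the \emph{universal} $C^*$-algebra $U$ on a copy of $B$ and generators $t_\xi$ subject to the four relations exists. Universality gives, for free, bounded $*$-homomorphisms $p:U\to\cO_E$ (sending $t_\xi\mapsto S_\xi$) and $q:U\to C$ (sending $t_\xi\mapsto s_\xi$), and the whole theorem reduces to showing $p$ is injective, so that $\wt{\sigma}=q\circ p^{-1}$. This is where your machinery applies correctly: $U$ carries a gauge action by universality, $p$ is equivariant, the covariance relation collapses $U^\gamma$ onto the canonical image of $B$ (on which $p$ is injective because $i=p|_B$ is), and the expectation $\Phi_U=\int\gamma_z\,dz$ is faithful simply because it is an average over a compact group --- this has nothing to do with injectivity of $\phi$, which instead enters in identifying the fixed-point algebra. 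Then $p(u)=0$ with $u\ge 0$ gives $p(\Phi_U(u))=\Phi_{\cO_E}(p(u))=0$, hence $\Phi_U(u)=0$, hence $u=0$. One small slip to fix as well: the fixed-point algebra of $\cO_E$ for a self-Morita equivalence is just $i(B)$, not a nontrivial inductive limit of the $\cK(E^{(n)})$ --- that description belongs to the Toeplitz--Pimsner algebra.
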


The following example was already in \cite{P97}.
\begin{ex}\label{ex:3.3}
Let $B$ be a \mbox{\Cs algebra} and $\alpha: B \rightarrow B$ an automorphism of $B$. Then $(B,\alpha)$ is naturally a self-Morita equivalence for $B$. 
The right Hilbert \mbox{$B$-module} structure is the standard one, with right $B$ valued Hermitian product $\inn{}{a,b}{B} = a^* b$.  
The automorphism $\alpha $ is used to define the left action via $a \cdot b = \alpha(a) b $ and the left $B$-valued Hermitian product by $\inn{B}{a,b}{} = \alpha(a^*b)$. 
The module $\mathcal{E}_{\infty}$ is isomorphic to a direct sum of copies of $B$. Indeed, for all $n \in \IZ$, the module $E^{(n)} $ is isomorphic to $B$ as a vector space, with 
\begin{equation}
\label{autaction}
E^{(n)} \ni a \cdot (x_1 \ots \cdots \ots x_n) \longmapsto \alpha^n(a) \alpha^{n-1}(x_1) \cdots \alpha(x_{n-1}) x_n \in B.
\end{equation}
The corresponding Pimsner algebra $\cO_E$ agrees with the crossed product algebra $B \rtimes_{\alpha} \IZ$.
\end{ex}

\begin{ex}
In the finitely generated projective case, occurring e.g. when the algebra $B$ is unital, the Pimsner algebra of a self-Morita equivalence can be realized explicitly in terms of generators and relations \cite{KPW}.
Since $E$ is finitely generated projective, it admits a finite frame $\lbrace \eta_j \rbrace_{j=1}^{n}$. 
Then, from the reconstruction formula as in \eqref{recon}, for any $b \in B$: 
$$
\phi(b) \eta_j= \sum\nolimits_{k} \eta_k \inn{}{\eta_k, \phi(b) \eta_j}{B},
$$
Then the \mbox{\Cs algebra} $\cO_E$ is the universal \mbox{\Cs algebra} generated by $B$ together with 
$n$ operators $S_1, \dots, S_n$, satisfying
\begin{align*}
S_k^* S_j & = \inn{}{\eta_k, \eta_j}{B}, \quad \sum\nolimits_j S_j S_j ^* = 1,  
\\ \quad \mbox{and} \quad 
b S_j & = \sum\nolimits_{k} S_k \inn{}{\eta_i, \phi(b) \eta_j}{B},
\end{align*}
for $b \in B$, and $j=1,\dots, n$. The generators $S_i$ are partial isometries if and only if the frame satisfies $\inn{}{\eta_k,\eta_j}{}=0$ for $k\neq j$. For $B=\IC$ and $E$ a Hilbert space of dimension $n$, one recovers the original Cuntz algebra $\cO_n$ \cite{C77}.
\end{ex}

Similarly to crossed products by the integers, Pimsner algebras can be naturally endowed with a circle action 
$\alpha: \IS^1 \rightarrow \mbox{Aut}(\cO_E)$ that turns them into $\IZ$-graded algebras. Indeed, by the universal property in Proposition~\ref{t:unipro} (with $C=\cO_E$, $\sigma=i$ 
the injection of $B$ into $\cO_E$, and $s_\xi:=z^*S_\xi$), the map 
$$
S_\xi \mapsto \alpha_z(S_\xi):=z^*S_\xi , \qquad z\in\IS^1, 
$$
extends to an automorphism of $\cO_E$. The degree $n$ part of $\cO_E$ can then be defined as usual, 
as the weight space $\{x\in\cO_E:\alpha_z(x)=z^{-n}x\}$.

\subsection{Generalized crossed products}

A somewhat better framework for understanding the relation between Pimsner algebras and algebras endowed with a circle action is that of generalized crossed products.
They were introduced in \cite{AEE} and are naturally associated with Hilbert bimodules via the notion of a covariant representation.

\begin{defn}
Let $E$ be a Hilbert  $(B,B)$-bimodule (not necessarily full). A
covariant representation of $E$ on a \mbox{\Cs algebra} $C$ is a pair $(\pi, \mathcal{T})$ where
\begin{enumerate}\itemsep=5pt
 \item $\pi : B \rightarrow C$ is a $*$-homomorphism of algebras;
 \item $\mathcal{T}:E \rightarrow C$ satisfies the conditions 
\begin{align*}
 \mathcal{T}(\xi) \pi(b) &= \mathcal{T}(\xi b) &
 \mathcal{T}(\xi)^*\mathcal{T}(\eta) &= \pi (\inn{}{\xi,\eta}{B}) \\[2pt]
 \pi(b) \mathcal{T}(\xi) &= \mathcal{T}(b \xi) &
 \mathcal{T}(\xi)\mathcal{T}(\eta)^* &= \pi (\inn{B}{\xi,\eta}{})\end{align*}
for all $b \in B$ and $\xi, \eta \in E$. \vspace*{-19pt}
\end{enumerate}
\end{defn}

\smallskip

\noindent
By \cite[Prop.~2.3]{AEE}, covariant representations always exist.  
\begin{defn}\label{de:gcp}
Let $E$ be a Hilbert $(B,B)$-bimodule.
The \emph{generalized crossed product} $B \rtimes_{E} \IZ$ of $B$ by $E$ is the universal \mbox{\Cs algebra}
generated by the covariant representations of E. 
\end{defn}

\noindent
In \cite[Prop.~ 2.9]{AEE} a generalized crossed product algebra is given as a cross sectional algebra 
(\emph{\`a la} Fell-Doran) for a suitable \mbox{\Cs algebraic} bundle over $\IZ$. 

It is worth stressing that a generalized crossed product need not be a Pimsner algebra  in general, since the representation of $B$ giving the left action need not be injective. However, by using the universal properties \ref{t:unipro}, one shows that for a self-Morita equivalence bimodule the two constructions yield the same algebra. The advantage of using generalized crossed products is that a \mbox{\Cs algebra} carrying a circle action that satisfies a suitable completeness condition, can be \emph{reconstructed} as a generalized crossed product.

\subsection{Algebras and circle actions}\label{s:acircle}
Let $A$ be a $C^*$-algebra and $\{\sigma_z\}_{z \in \IS^1}$ be a strongly continuous action of the circle $\IS^1$ 
on $A$. For each $n \in \IZ$, one defines the spectral subspaces
$$
A_{n} := \big\{ \xi \in A \mid \sigma_z(\xi) = z^{-n} \, \xi \ \ \mbox{ for all } z \in \IS^1 \big\} \, .
$$
Clearly, the invariant subspace $A_{0} \subseteq A$ is a \mbox{\Cs subalgebra} of $A$, with unit whenever $A$ is unital; this is the \emph{fixed-point subalgebra}. Moreover, the subspace $A_{n}A_{m}$, meant as the \emph{closed} linear span of the set of products $xy$ with $x \in A_{n}$ and $y \in A_{m}$, 
is contained in $A_{n+m}$. Thus, the algebra $A$ is $\IZ$-graded and the grading is compatible with the involution, 
that is $A_{n}^{*} = A_{-n}$ for all $n\in\IZ$.

In particular, for any $n \in \mathbb{Z} $ the space $A_{n}^*A_{n}$ is a closed two-sided ideal in $A_{0}$. 
Thus, each spectral subspace  $A_{n}$ has a natural structure of Hilbert $A_{0}$-bimodule (not necessarily full) with left and right Hermitian products:
\begin{equation}\label{bim0}
\inn{A_{0}}{x,y}{} = xy^* , \qquad \inn{}{x,y}{A_{0}} =x^*y , \quad \mbox{ for all } x, y \in A_{n}.
\end{equation}

The description via spectral subspaces allows one to characterize algebras that are obtained as generalized crossed products in the sense of Definition~\ref{de:gcp}, in terms of a quite manageable necessary and sufficient condition.

\begin{thm}[{\cite[Thm.~3.1]{AEE}}] \label{thm:ss}
Let $A$ be a \mbox{\Cs algebra} with a strongly continuous action of the circle. The algebra $A$ is isomorphic to $A_{0}\rtimes_{A_{1}} \IZ$ if and only if $A$ is generated, as a \mbox{\Cs algebra}, by the fixed point algebra $A_{0}$ and the first spectral subspace $A_{1}$ of the circle action. 
\end{thm}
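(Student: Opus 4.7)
The forward direction is essentially immediate from Definition~\ref{de:gcp}: the generalized crossed product $A_0 \rtimes_{A_1} \IZ$ is, by construction, generated by the images of a universal covariant representation, i.e.~by (copies of) the fixed-point algebra $A_0$ and the first spectral subspace $A_1$. Under the identification of $A$ with $A_0 \rtimes_{A_1} \IZ$, these generators sit inside $A$ as the stated subspaces, proving one implication.

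For the non-trivial implication, my plan is to build the isomorphism via the universal property. First I would verify that the pair $(\pi,\mathcal{T})$, with $\pi : A_0 \hookrightarrow A$ and $\mathcal{T} : A_1 \hookrightarrow A$ the tautological inclusions, is a covariant representation of the Hilbert $(A_0,A_0)$-bimodule $A_1$ on the C*-algebra $A$. The module relations $\mathcal{T}(\xi)\pi(b)=\mathcal{T}(\xi b)$ and $\pi(b)\mathcal{T}(\xi)=\mathcal{T}(b\xi)$ hold trivially since all the products are computed inside $A$, while $\mathcal{T}(\xi)^*\mathcal{T}(\eta) = \xi^*\eta = \pi(\hs{\xi}{\eta}_{A_0})$ and $\mathcal{T}(\xi)\mathcal{T}(\eta)^* = \xi\eta^* = \pi(\hs{\xi}{\eta}_{A_0})$ are precisely the definitions \eqref{bim0} of the two $A_0$-valued Hermitian products on $A_1$. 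By universality this yields a $*$-homomorphism $\Phi : A_0 \rtimes_{A_1} \IZ \to A$. Surjectivity of $\Phi$ follows at once from the hypothesis, since its image is a C*-subalgebra of $A$ containing both $A_0$ and $A_1$.

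The main obstacle is injectivity of $\Phi$, and my approach is a gauge-invariant uniqueness argument. The generalized crossed product carries a canonical circle action $\wt{\sigma}$ coming from its description as the cross-sectional algebra of a Fell bundle over $\IZ$ (so that the $n$-th spectral subspace is the closure of the image of $A_1^{\otimes n}$ for $n\geq 0$, and similarly for $n<0$). Under this action, the fixed-point subalgebra is exactly the copy of $A_0$ inside $A_0 \rtimes_{A_1} \IZ$. By construction $\Phi$ intertwines $\wt{\sigma}$ with the given action $\sigma$ on $A$, so $\Phi \circ \wt{\sigma}_z = \sigma_z \circ \Phi$ for all $z \in \IS^1$. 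Integrating over the circle produces faithful conditional expectations $\wt{E}: A_0 \rtimes_{A_1} \IZ \to A_0$ and $E : A \to A_0$, and equivariance of $\Phi$ together with $\Phi|_{A_0} = \id$ gives the intertwining identity $E \circ \Phi = \wt{E}$.

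Injectivity is then extracted as usual: if $\Phi(x)=0$ for some $x \in A_0 \rtimes_{A_1} \IZ$, then
\[
\wt{E}(x^*x) \;=\; E\bigl(\Phi(x^*x)\bigr) \;=\; E\bigl(\Phi(x)^*\Phi(x)\bigr) \;=\; 0,
\]
and faithfulness of $\wt{E}$ on positives forces $x = 0$. Hence $\Phi$ is an isomorphism. The place where care is needed is the identification of the fixed-point algebra of $\wt{\sigma}$ with $A_0$ and the faithfulness of $\wt{E}$; both rely on the cross-sectional Fell-bundle picture of $A_0 \rtimes_{A_1} \IZ$ recalled after Definition~\ref{de:gcp}, rather than on a direct computation inside the universal algebra.
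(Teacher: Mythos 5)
The paper offers no proof of this statement---it is quoted directly from \cite{AEE}, Thm.~3.1---so there is nothing internal to compare against; your argument is a correct reconstruction of the standard proof from that reference. You use exactly the expected route: the tautological inclusions form a covariant representation of the bimodule $A_1$ (your verification against \eqref{bim0} is right, modulo writing the left-handed product ${}_{A_0}\hs{\xi}{\eta}$ with the wrong decoration), the universal property yields $\Phi$, surjectivity is the generation hypothesis, and injectivity follows by gauge-invariant uniqueness from the faithful conditional expectation onto the unit fibre of the Fell bundle over $\IZ$; the two facts you flag as borrowed (faithfulness of $\wt{E}$, which holds since $\IZ$ is amenable, and the identification of the fixed-point algebra of the dual action with $A_0$) are precisely the content of the cross-sectional description recalled after Definition~\ref{de:gcp}.
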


The above condition was introduced in \cite{E94} and is referred to as having a \emph{semi-saturated} action. It is fulfilled in a large class of examples, like crossed product by the integers, and noncommutative (or quantum) principal circle bundles, as we shall see quite explicitly in \S\ref{se:3} below. In fact, this condition encompasses more general non-principal actions, which are however beyond the scope of the present paper.

In Theorem~\ref{thm:ss} a crucial role is played by the module $A_{1}$. If we assume that it is a full bimodule, that is if 
\begin{equation}\label{eq:lss}
A_{1}^*A_{1} =A_{0} = A_{1}A_{1}^*, 
\end{equation}
the action $\sigma$ is said to have \emph{large spectral subspaces} (\textit{cf.} \cite[\S 2]{P81}), a slightly stronger condition than semi-saturatedness (\emph{cf.} {\cite[Prop.~ 3.4]{AKL14}}). Firstly, the condition above is equivalent to the condition that each bimodule $A_{n}$ is full, that is \[A_{n}^*A_{n} =A_{0} = A_{n}A_{n}^*\] for all $n\in\IZ$.
When this happens, all bimodules $A_{n}$ are self-Morita equivalence bimodules
for $A_{0}$, with isomorphism $\phi : A_{0} \to \cK_{A_{0}}(A_{n}) $ given by 
\begin{equation}
\label{phi}
\phi(a)(\xi) := a \, \xi, \qquad \mbox{ for all } a \in A_{0}, \,  \xi \in A_{n}.
\end{equation}

Combining Theorem~\ref{thm:ss} with the fact that for a self-Morita equivalence the generalized crossed product construction and Pimsner's construction yield the same algebra, we obtain the following result.

\begin{thm}\label{t:pimcir}{\cite[Thm.~ 3.5]{AKL14}}
Let $A$ be a \mbox{\Cs algebra} with a strongly continuous action of the circle. Suppose that the first spectral subspace $A_{1}$ is a full and countably generated Hilbert bimodule over $A_{0}$.
Then the Pimsner algebra $\cO_{A_{1}}$  of the self-Morita equivalence $(A_{1}, \phi)$, with $\phi$ as in is \eqref{phi}, is isomorphic to $A$. The isomorphism is given by $S_\xi \mapsto \xi$ for all $\xi \in A_{1}$.
\end{thm}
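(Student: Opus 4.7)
The plan is to combine Theorem~\ref{thm:ss}, which identifies a $C^*$-algebra carrying a strongly continuous circle action with a suitable generalized crossed product, with the observation (flagged in the text following Definition~\ref{de:gcp}) that for a self-Morita equivalence bimodule the generalized crossed product coincides with the Pimsner algebra. The first step is to check that Theorem~\ref{thm:ss} applies: fullness of $A_1$ as a bimodule is exactly $A_1^* A_1 = A_0 = A_1 A_1^*$, the large-spectral-subspace condition \eqref{eq:lss}, which is strictly stronger than semi-saturatedness. Hence Theorem~\ref{thm:ss} yields $A \cong A_0 \rtimes_{A_1} \IZ$. Simultaneously, fullness together with the Hermitian products \eqref{bim0} and the left action \eqref{phi} makes $(A_1, \phi)$ a self-Morita equivalence bimodule over $A_0$.

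The core step is to verify that for any self-Morita equivalence $(E,\phi)$ over $B$, the generalized crossed product $B \rtimes_E \IZ$ and the Pimsner algebra $\cO_E$ are canonically isomorphic via $\mathcal{T}(\xi) \leftrightarrow S_\xi$. This is a matching of universal properties. Given a covariant representation $(\pi,\mathcal{T})$ on $C$, setting $\sigma := \pi$ and $s_\xi := \mathcal{T}(\xi)$ produces data satisfying the hypotheses of Proposition~\ref{t:unipro}: linearity and the first three relations are definitional, while the last relation $s_\xi s_\eta^* = \sigma(\phi^{-1}(\theta_{\xi,\eta}))$ follows from the self-Morita compatibility \eqref{mekop}, which forces $\phi^{-1}(\theta_{\xi,\eta}) = \inn{B}{\xi,\eta}{}$. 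Conversely, a family of Pimsner generators for $(E,\phi)$ assembles into a covariant representation of the bimodule. Both constructions therefore solve the same universal problem and agree.

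Specialising to $B = A_0$, $E = A_1$, $C = A$, $\sigma : A_0 \hookrightarrow A$ the inclusion and $s_\xi := \xi \in A_1 \subset A$, the required identities reduce to \eqref{bim0} together with $\xi \eta^* = \phi^{-1}(\theta_{\xi,\eta})$; the composite identification $\cO_{A_1} \cong A_0 \rtimes_{A_1} \IZ \cong A$ then sends $S_\xi$ to $\xi$, as claimed. The main obstacle lies in the core step: for a general Hilbert bimodule the left action $\phi$ need not land in the compact operators, and only the self-Morita equivalence hypothesis --- equivalently, surjectivity of $\phi$ onto $\cK(A_1)$ --- brings the Pimsner and the generalized crossed product universal formulations into exact correspondence. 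Once this bridge is in place, injectivity of the induced map $\cO_{A_1}\to A$, which could otherwise be the delicate point, is automatic from the matching of universal properties; alternatively it could be obtained by a gauge-invariant uniqueness argument, using that the map intertwines the natural circle actions on both sides and is manifestly injective on the canonical copy of $A_0$.
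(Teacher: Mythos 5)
Your proposal is correct and follows essentially the same route as the paper, which obtains this result precisely by combining Theorem~\ref{thm:ss} with the identification of the generalized crossed product and the Pimsner algebra for a self-Morita equivalence bimodule (the paper itself defers the details to \cite[Thm.~3.5]{AKL14}). Your additional checks --- that fullness of $A_1$ forces semi-saturatedness so that Theorem~\ref{thm:ss} applies, and that the covariant-representation relations match the universal relations of Proposition~\ref{t:unipro} via $\phi^{-1}(\theta_{\xi,\eta})=\inn{A_0}{\xi,\eta}{}=\xi\eta^*$ --- are exactly the points the cited argument rests on.
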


Upon completions, all examples considered in the present paper will fit into the framework of the previous theorem. 

\subsection{Six term exact sequences}\label{se:stes}
With a Pimsner algebra there come two natural six term exact sequences in $KK$-theory, which relate the $KK$-groups of the Pimsner algebra $\cO_E$ with those of the $C^*$-algebra of (the base space) scalars $B$. The corresponding sequences in $K$-theory are noncommutative analogues of the Gysin sequence which in the commutative case relates the $K$-theories of the total space and of the base space of a principal circle bundle. The classical cup product with the Euler class is replaced, in the noncommutative setting, by a Kasparov product with the identity minus the generating Hilbert $C^*$-module $E$.

Firstly, since $\phi : B \to \cK(E) \subseteq \cL(E)$, the following class is well defined.

\begin{defn}
The class in $KK_0(B,B)$ defined by the even Kasparov module $(E, \phi, 0)$ (with trivial grading) will be denoted by $[E]$.
\end{defn}

\noindent
 Next, consider the orthogonal projection $P : \mathcal{E}_\infty \to \mathcal{E}_\infty$ with range 
\[
\mbox{Im}(P) =  \bigoplus_{n \geq 0 }^\infty E^{(n)} \subseteq \mathcal{E}_\infty \, .
\]
Since $[P,S_\xi] \in \cK(\mathcal{E}_\infty)$ for all $\xi \in E$, one has $[P,S] \in \cK(\mathcal{E}_\infty)$ for all $S \in \cO_E$. 
Then, let $F := 2P-1 \in \cL(\mathcal{E}_\infty)$ and let $\wt \phi : \cO_E \to \cL(\mathcal{E}_\infty)$ be the inclusion.

\begin{defn}
The class in $KK_1(\cO_E,B)$ defined by the odd Kasparov module $(\mathcal{E}_\infty, \wt \phi, F)$ will be denoted by $[\partial]$.
\end{defn}

\noindent
For any separable $C^*$-algebra $C$ we then have the group homomorphisms
\[
[E] : KK_*(B,C) \to KK_*(B,C)\, , \quad [E] : KK_*(C,B) \to KK_*(C,B) \\
\]
and
\[
[\partial] : KK_*(C,\cO_E) \to KK_{* + 1}(C,B) \, , \quad [\partial] : KK_*(B,C) \to KK_{* + 1}(\cO_E,C) \, ,
\]
which are induced by the Kasparov product.

These yield natural six term exact sequences in $KK$-theory \cite[Thm.~4.8]{P97}. We
report here the case $C = \IC$. Firstly, the sequence in $K$-theory: 
\[
\begin{CD}
K_0(B) @>{1 - [E]}>> K_0(B) @>{i_*}>> K_0(\cO_E) \\
@A{[\partial]}AA & & @VV{[\partial]}V \quad . \\
K_1(\cO_E) @<<{i_*}< K_1(B) @<<{1 - [E]}< K_1(B) 
\end{CD}
\]
with $i_*$ the homomorphism in $K$-theory induced by the inclusion $i: B \to \cO_E$.
This could be considered as a generalization of the classical \emph{Gysin sequence} in $K$-theory (see \cite[IV.1.13]{Ka78}) 
for the `line bundle' $E$ over the `noncommutative space' $B$ and with the map $1 - [E]$ having the same role as the 
\emph{Euler class} $\chi(E):=1 - [E]$ of the line bundle $E$. 

The second sequence would then be an analogue in $K$-homology:
\[
\begin{CD}
K^0(B) @<<{1 - [E]}< K^0(B) @<<{i^*}< K^0(\cO_E) \\
@VV{[\partial]}V & & @A{[\partial]}AA \quad . \\
K^1(\cO_E) @>{i^*}>> K^1(B) @>{1 - [E]}>> K^1(B)
\end{CD}
\]
where now $i^*$ is the induced homomorphism in $K$-homology.

Gysin sequences in $K$-theory were given in \cite{ABL14} for line bundles over quantum projective spaces and leading to a class of quantum lens spaces. 
These examples were generalized in \cite{AKL14} to a class of quantum lens spaces as circle bundles over quantum weighted projective lines with arbitrary weights.
 
\section{Principal bundles and graded algebras}\label{se:3}

Examples of Pimsner algebras come from noncommutative (or quantum) principal circle bundles. At an algebraic level the latter are intimately related to  $\IZ$-graded $*$-algebras. When completing with natural norms one is lead to continuous circle actions on a \mbox{\Cs algebra}  with $\IZ$-grading given by spectral subspaces, that is the framework described in \S\ref{s:acircle}.

\subsection{Noncommutative principal circle bundles and line bundles}
We aim at exploring the connections between (noncommutative) principal circle bundles, frames for modules as described in \S\ref{frames}, and $\IZ$-graded algebras. The circle action is dualized in a coaction of the dual group Hopf algebra. 
Thus, we need to consider the unital complex algebra 
$$
\cO(U(1)) := \IC[z,z^{-1}]/\inn{}{1 - z z^{-1}}{} ,
$$ 
where $\inn{}{1 - z z^{-1}}{}$ is the ideal generated by $1 - z z^{-1}$ in the polynomial algebra $\IC[z,z^{-1}]$ on two variables. The algebra $\cO(U(1))$ is a Hopf algebra by defining, for any $n \in \IZ$, the coproduct $\Delta : z^n \mapsto z^n \ots z^n$, the antipode $S : z^n \mapsto z^{-n}$ and the counit $\epsilon : z^n \mapsto 1$. 

Let $\mathcal{A}$ be a complex unital algebra and suppose in addition it is a right comodule algebra over $\cO(U(1))$, that is 
$\mathcal{A}$ carries a coaction of $\cO(U(1))$,
$$
\Delta_R : \mathcal{A} \to \mathcal{A} \ots \cO(U(1))  \, ,
$$
a homomorphism of unital algebras. Let
$\mathcal{B} := \{ x \in \mathcal{A} \mid \Delta_R(x) = x \ots 1 \}$ be the unital subalgebra of $\mathcal{A}$ made of coinvariant elements for $\Delta_R$.

\begin{defn}\label{de:can}
One says that the datum $\big( \mathcal{A}, \cO(U(1)), \mathcal{B} \big)$ is a \emph{noncommutative (or quantum) principal $U(1)$-bundle} when the \emph{canonical map}
$$
\mathrm{can} : \mathcal{A} \otimes_{\mathcal{B}} \mathcal{A} \to \mathcal{A} \ots \cO(U(1))  \, , \quad x \ots y \mapsto x \, \Delta_R(y) \, ,
$$
is an isomorphism.
\end{defn}

\noindent
In fact, the definition above is the statement that the right comodule algebra $\A$ is a $\cO(U(1))$ Hopf-Galois extension of $\mathcal{B}$, and this is equivalent (in the present context) by \cite[Prop.~1.6]{Haj:SCQ} to the bundle being a noncommutative principal bundle for the universal differential calculus in the sense of \cite{BM93}.

Next, let $\mathcal{A} = \oplus_{n \in \IZ} \mathcal{A}_{n}$ be a $\IZ$-graded unital algebra. The unital algebra homomorphism,
$$
\Delta_R : \mathcal{A} \to \mathcal{A} \ots \cO(U(1)) , \quad x \mapsto x \ots z^{-n} \, , \, \, \, \mbox{for} \, \, \, x \in 
\mathcal{A}_{n} \, 
$$
turns $\mathcal{A}$ into a right comodule algebra over $\cO(U(1))$. Clearly the unital subalgebra of coinvariant elements coincides with $\mathcal{A}_{0}$.
 
We present here a necessary and sufficient condition for the corresponding canonical map as in 
Definition~\ref{de:can} to be bijective \cite[Thm.~4.3]{AKL14} (\emph{cf.}  also \cite[Lem.~5.1]{SV15}). 
This condition is more manageable in general, and in particular it can be usefully applied for examples like the quantum lens spaces as principal circle bundles over quantum weighted projective lines \cite{AKL14,DAL14}. 

\begin{thm}\label{quapriseq}
The triple $\big( \mathcal{A}, \cO(U(1)), \mathcal{A}_{0} \big)$ is a noncommutative principal $U(1)$-bundle if and only if there exist finite sequences
$$
\{ \xi_j \}_{j = 1}^N \, , \, \, \{\beta_i\}_{i = 1}^M \mbox{ in } \mathcal{A}_{1} \quad \mbox{and} \quad
\{\eta_j\}_{j = 1}^N \, , \, \, \{\alpha_i\}_{i = 1}^M \mbox{ in } \mathcal{A}_{-1}
$$
such that one has identities:
\begin{equation}\label{fr1u}
\sum\nolimits_{j = 1}^N \eta_j \xi_j = 1_{\mathcal{A}} = \sum\nolimits_{i = 1}^M \alpha_i \beta_i \, .
\end{equation}
\end{thm}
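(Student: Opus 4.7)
The plan is to prove both implications by exploiting the fact that the $\IZ$-grading on $\mathcal{A}$ makes the canonical map homogeneous. Since $\Delta_R$ sends $\mathcal{A}_n$ into $\mathcal{A}_n \otimes z^{-n}$, the map $\mathrm{can}$ decomposes as a direct sum, over $n \in \IZ$, of multiplication maps
\[
\mu_n : \mathcal{A} \otimes_{\mathcal{A}_0} \mathcal{A}_n \longrightarrow \mathcal{A}, \qquad x \otimes y \longmapsto xy,
\]
after identifying $\mathcal{A}\otimes \IC z^{-n}$ with $\mathcal{A}$. Bijectivity of $\mathrm{can}$ is therefore equivalent to bijectivity of every $\mu_n$, and the whole problem reduces to a piecewise statement about the graded product.

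For the direction $(\Rightarrow)$, assume $\mathrm{can}$ is bijective and apply $\mathrm{can}^{-1}$ to $1_{\mathcal{A}} \otimes z^{-1}$. Homogeneity forces the preimage to lie in the summand $\mathcal{A}_{-1} \otimes_{\mathcal{A}_0} \mathcal{A}_1$, so it may be written as $\sum_j \eta_j \otimes \xi_j$ with $\eta_j \in \mathcal{A}_{-1}$ and $\xi_j \in \mathcal{A}_1$; reapplying $\mathrm{can}$ returns $\sum_j \eta_j \xi_j = 1_{\mathcal{A}}$. The second identity arises from the analogous computation starting from $1_{\mathcal{A}} \otimes z$. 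For the direction $(\Leftarrow)$, the two identities express the conditions $1 \in \mathcal{A}_{-1}\mathcal{A}_1$ and $1 \in \mathcal{A}_1\mathcal{A}_{-1}$, which by a straightforward induction (substituting one inside the other) upgrade to $1 \in \mathcal{A}_{-n}\mathcal{A}_n \cap \mathcal{A}_n\mathcal{A}_{-n}$ for every $n$. Surjectivity of each $\mu_n$ is then immediate from writing $a = a\cdot 1$ and expanding using the appropriate identity.

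Injectivity is handled by constructing an explicit inverse. For $n = 1$ the map $\sigma_1 : a \mapsto \sum_j a\eta_j \otimes \xi_j$ inverts $\mu_1$; the essential calculation is
\[
\sigma_1(\mu_1(x \otimes y)) \;=\; \sum_j xy\eta_j \otimes \xi_j \;=\; x \otimes \Bigl(\sum_j y\eta_j\xi_j\Bigr) \;=\; x \otimes y,
\]
where the middle equality uses that each $y\eta_j \in \mathcal{A}_0$ and therefore slides across the balanced tensor. For general $n$ the inverse is obtained by iterating this move, using the frames produced by the induction above. The main obstacle is ensuring that at every step of the iteration the element being moved across $\otimes_{\mathcal{A}_0}$ genuinely lies in $\mathcal{A}_0$; this is precisely where having both identities in the hypothesis is essential, since one provides the frame needed to absorb a factor of positive degree, the other a factor of negative degree. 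Once the $\sigma_n$ are constructed, assembling the local inverses yields the inverse of $\mathrm{can}$.
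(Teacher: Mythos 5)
Your proof is correct and follows essentially the same route as the paper's (i.e.\ the proof of \cite[Thm.~4.3]{AKL14} that the text defers to): the grading splits $\mathrm{can}$ into multiplication maps, the frame elements are extracted by applying $\mathrm{can}^{-1}$ to $1 \otimes z^{\mp 1}$, and your iterated $\sigma_n$ is exactly the explicit inverse reported in \eqref{caninvn}. One remark: your two identities correctly land one in $\mathcal{A}_{1}\mathcal{A}_{-1}$ and one in $\mathcal{A}_{-1}\mathcal{A}_{1}$, which is the intended reading of \eqref{fr1u} (as printed there both products have the degree $-1$ factor first, a typo --- compare the relations used in Proposition~\ref{p:quaprifin}).
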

\noindent
Out of the proof in \cite[Thm.~4.3]{AKL14} we just report the explicit form of the inverse map 
$\mathrm{can}^{-1} : \A \ots \cO(U(1)) \to \A \ots_{\A_{0}} \A$, given by the formula
\begin{equation} \label{caninvn}
\mathrm{can}^{-1} : x \ots z^n \mapsto 
\begin{cases}
\sum_{j_{k=1}}^N \, x \, \xi_{j_1} \cdot\ldots\cdot \xi_{j_n} \ots \eta_{j_n} \cdot\ldots\cdot \eta_{j_1} \, ,
& \,\, n < 0  \\[2pt]
x\otimes 1 \,,& \,\, n = 0\\[2pt]
\sum_{i_{k=1}}^{M} \, x \, \alpha_{i_1} \cdot\ldots\cdot \alpha_{i_{-n}} \ots \beta_{i_{-n}} \cdot\ldots\cdot \beta_{i_1} \, ,
& \,\, n > 0
\end{cases} \, \,.
\end{equation}
Now, \eqref{fr1u} are exactly the frame relations \eqref{eq:dualframes} for $\A_1$ and $\A_{-1}$, which imply that they are finitely generated and projective over $\A_{0}$ \cite[Cor.~4.5]{AKL14}.

Explicitly, with the $\xi$'s and the $\eta$'s as above, one defines the module homomorphisms
\begin{align*}
& \Phi_{(1)}   : \A_{1}  \to (\A_{0})^N \, ,  \\
& \Phi_{(1)}(\zeta)   = ( \eta_1 \zeta \, ,  \eta_2 \, \zeta \, , \cdots \, , \eta_N \, \zeta )^{tr} \\
\intertext{and}
& \Psi_{(1)}  : (\A_{0})^N \to \A_{1}  \, , \\ 
& \Psi_{(1)} (x_1 \, , x_2  \, , \cdots \, , x_N )^{tr}    = \xi_1 \, x_1 + \xi_2 \, x_2 + \cdots + \xi_N \, x_N  \, .
\end{align*}
It then follows that $\Psi_{(1)} \Phi_{(1)} = \mbox{id}_{\A_{1} }$. 
Thus $\mathbf{e}_{(1)} := \Phi_{(1)}\Psi_{(1)}$ is an idempotent in $M_N(\A_{0})$, 
and $\A_{1} \simeq \mathbf{e}_{(1)} (\A_{0})^N$. Similarly, 
with the $\alpha$'s and the $\beta$'s as above, one defines the module homomorphisms
\begin{align*}
& \Phi_{(-1)}   : \A_{1}  \to (\A_{0})^M \, ,  \\
& \Phi_{(-1)}(\zeta)   = ( \beta_1 \zeta \, ,  \beta_2 \, \zeta \, , \cdots \, , \beta_M \, \zeta )^{tr} \\
\intertext{and}
& \Psi_{(-1)}  : (\A_{0})^M \to \A_{1}  \, , \\ 
& \Psi_{(-1)} (x_1 \, , x_2  \, , \cdots \, , x_M )^{tr}    = \alpha_1 \, x_1 + \alpha_2 \, x_2 + \cdots + \alpha_M \, x_M  \, .
\end{align*}
Now one checks that $\Psi_{(-1)}\Phi_{(-1)}= \mbox{id}_{\A_{-1} }$. 
Thus $\mathbf{e}_{(-1)}:= \Phi_{(-1)}\Psi_{(-1)}$ is an idempotent in $M_M(\A_{0})$, and
$\A_{-1} \simeq \mathbf{e}_{(-1)} (\A_{0})^M$.

\subsection{Line bundles}
In the context of the previous section, the modules $\A_{1}$ and $\A_{-1}$ emerge as \emph{line bundles} 
over the noncommutative space dual to the algebra $\A_{0}$. In the same vein all modules $\A_{n}$ for 
$n\in\IZ$ are line bundles as well. 

Given any natural number $d$ consider the $\IZ$-graded unital algebra 
\begin{equation}\label{eq:Zd}
\mathcal{A}^{\IZ_d} := \oplus_{n \in \IZ} \mathcal{A}_{dn} ,
\end{equation} 
which can be seen as a fixed point algebra for an action of $\IZ_d := \IZ/ d\IZ$ 
on the starting algebra $\mathcal{A}$. As a corollary of Theorem~\ref{quapriseq} one gets the following:

\begin{prop}\label{p:quaprifin}
Suppose that $\big( \mathcal{A}, \cO(U(1)), \mathcal{A}_{0} \big)$ is a noncommutative principal $U(1)$-bundle. 
Then, for all $d\in \IN$, the datum $\big( \mathcal{A}^{\IZ_d}, \cO(U(1)), \mathcal{A}_{0} \big)$ is a noncommutative principal $U(1)$-bundle as well. 
\end{prop}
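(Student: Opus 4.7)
The plan is to verify the frame characterization of Theorem~\ref{quapriseq} for the new datum. First, I note that $\mathcal{A}^{\IZ_d} = \bigoplus_{n\in\IZ} \mathcal{A}_{dn}$ is naturally $\IZ$-graded with $(\mathcal{A}^{\IZ_d})_n = \mathcal{A}_{dn}$, and the induced coaction of $\cO(U(1))$ on $\mathcal{A}^{\IZ_d}$ has $\mathcal{A}_0$ as its coinvariant subalgebra. In particular, the degree $+1$ and degree $-1$ spectral subspaces of $\mathcal{A}^{\IZ_d}$ are exactly $\mathcal{A}_d$ and $\mathcal{A}_{-d}$. So to invoke Theorem~\ref{quapriseq} it suffices to exhibit finite families in $\mathcal{A}_d$ and $\mathcal{A}_{-d}$ satisfying the two identities in \eqref{fr1u}.

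Let $\{\xi_j\}_{j=1}^N \subset \mathcal{A}_1$, $\{\eta_j\}_{j=1}^N \subset \mathcal{A}_{-1}$, $\{\beta_i\}_{i=1}^M \subset \mathcal{A}_1$ and $\{\alpha_i\}_{i=1}^M \subset \mathcal{A}_{-1}$ be the frames provided by Theorem~\ref{quapriseq} for the original bundle. For a multi-index $\vec{\,\jmath\,} = (j_1,\dots,j_d)$ with each $j_k\in\{1,\dots,N\}$, I take the $d$-fold ordered products
\[
\tilde\xi_{\vec{\,\jmath\,}} := \xi_{j_1}\xi_{j_2}\cdots\xi_{j_d} \in \mathcal{A}_d, \qquad
\tilde\eta_{\vec{\,\jmath\,}} := \eta_{j_d}\eta_{j_{d-1}}\cdots\eta_{j_1} \in \mathcal{A}_{-d},
\]
and analogously, for a multi-index $\vec\imath = (i_1,\dots,i_d)$,
\[
\tilde\beta_{\vec\imath} := \beta_{i_d}\cdots\beta_{i_1}\in\mathcal{A}_d, \qquad
\tilde\alpha_{\vec\imath} := \alpha_{i_1}\cdots\alpha_{i_d}\in\mathcal{A}_{-d}.
\]

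The required identities are then obtained by telescoping. For $d=2$ one computes
\[
\sum_{j_1,j_2}\tilde\eta_{(j_1,j_2)}\tilde\xi_{(j_1,j_2)} \,=\, \sum_{j_2}\eta_{j_2}\Bigl(\sum_{j_1}\eta_{j_1}\xi_{j_1}\Bigr)\xi_{j_2} \,=\, \sum_{j_2}\eta_{j_2}\xi_{j_2} \,=\, 1_{\mathcal{A}},
\]
and the general case follows by a straightforward induction on $d$, peeling off the innermost pair $\sum_{j_k}\eta_{j_k}\xi_{j_k}=1_\mathcal{A}$ at each step. The identical calculation, applied to $\sum_i \alpha_i\beta_i = 1_\mathcal{A}$, yields $\sum_{\vec\imath}\tilde\alpha_{\vec\imath}\tilde\beta_{\vec\imath} = 1_\mathcal{A}$. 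Applying Theorem~\ref{quapriseq} with these new frames proves that $\big(\mathcal{A}^{\IZ_d},\cO(U(1)),\mathcal{A}_0\big)$ is a noncommutative principal $U(1)$-bundle.

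There is no real obstacle in this argument; the only point deserving care is the ordering of factors in $\tilde\eta_{\vec{\,\jmath\,}}$ (reverse of that in $\tilde\xi_{\vec{\,\jmath\,}}$) so that the products pair up as nested sums and collapse to $1_\mathcal{A}$.
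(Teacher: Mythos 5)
Your proof is correct and is essentially the paper's own argument: the paper likewise forms, for each multi-index $J\in\{1,\ldots,N\}^d$ (resp.\ $I\in\{1,\ldots,M\}^d$), the $d$-fold products $\xi_J=\xi_{j_1}\cdots\xi_{j_d}\in\mathcal{A}_d$ and $\eta_J=\eta_{j_d}\cdots\eta_{j_1}\in\mathcal{A}_{-d}$ (and similarly $\alpha_I,\beta_I$), telescopes the nested sums down to $1_{\mathcal{A}}$, and then applies Theorem~\ref{quapriseq} to the regraded algebra $\mathcal{A}^{\IZ_d}$ with $(\mathcal{A}^{\IZ_d})_{\pm 1}=\mathcal{A}_{\pm d}$. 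Your explicit attention to reversing the order of the $\eta$- and $\beta$-factors is precisely what makes the collapse work, so nothing is missing.
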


The proof of this result goes along the line of Theorem~\ref{quapriseq} and shows also that 
the right modules $\A_{d}$ and $\A_{-d}$ are finitely generated projective over $\A_{0}$ for all $d \in \IN$.
Indeed, let the finite sequences $\{ \xi_j\}_{j = 1}^N$, $\{\beta_i\}_{i = 1}^M$ in $\A_{1}$ and $\{\eta_j\}_{j = 1}^N$, 
$\{\alpha_i\}_{i = 1}^M$ in $\A_{-1}$ be as in Theorem~\ref{quapriseq}. Then,   
for each multi-index $J \in \{1,\ldots,N\}^d$ and each multi-index $I \in \{1,\ldots,M\}^d$  the elements
\begin{align*}
& \xi_J := \xi_{j_1} \cdot\ldots\cdot \xi_{j_d} \, , \quad  \beta_I := \beta_{i_d} \cdot\ldots\cdot \beta_{i_1} \in \A_{d} \quad \mbox{and} \\
& \eta_J := \eta_{j_d} \cdot\ldots\cdot \eta_{j_1} \, , \quad \alpha_I := \alpha_{i_1} 
\cdot\ldots\cdot \alpha_{i_d} \in \A_{-d} \, ,
\end{align*}
are clearly such that
$$
\sum_{J \in \{1,\ldots,N\}^d } \xi_J \, \eta_J =
1_{\mathcal{A}^{\IZ_d}} =
\sum_{I \in \{1,\ldots,M\}^d } \alpha_I \, \beta_I \ .
$$
These allow one on one hand to apply Theorem~\ref{quapriseq} to show principality and on the other hand to construct idempotents $\mathbf{e}_{(-d)}$ and $\mathbf{e}_{(d)}$, thus showing the finite projectivity of the right modules $\A_{d}$ and $\A_{-d}$ for all $d \in \IN$.

\subsection{Strongly graded algebras}

The relevance of graded algebras for noncommutative principal bundles was already shown in \cite{U81}. If $G$ is any (multiplicative) group with unit $e$, an algebra $\A$ is a \emph{$G$-graded algebra} if its admits a direct sum decomposition labelled by elements of $G$, that is $\A =\oplus_{g\in G} \A_g$, with the property that  $\A_g \A_h \subseteq \A_{gh}$, for all $g,h\in G$. If $\cH:= \IC G$ denotes the group algebra, it is well know that $\A$ is $G$-graded if and only if $\A$ is a right $\cH$-comodule algebra
for the coaction $\delta : \A \to \A \ots \cH$ defined on homogeneous elements $a_g \in \A_g$ by $\delta(a_g) = a_g \ots g$. Clearly, the coinvariants are given by $\A^{co\cH} = \A_e$, the identity components. One has then the following result 
(\emph{cf.} \cite[8.1.7]{M93}):
\begin{thm}\label{gradpr}
 The datum $\big(\A, \cH, \A_{e} \big)$ is a noncommutative principal $\cH$-bundle for the canonical map
 $$
 \mathrm{can} : \A \ots_{\A_{e}} \A \to \A \ots \cH  \, , \quad a \ots b \mapsto \sum\nolimits_{g} a b_g \ots g \, ,
 $$
 if and only if $\A$ is \emph{strongly graded}, that is $\A_g \A_h = \A_{gh}$, for all $g,h\in G$.
\end{thm}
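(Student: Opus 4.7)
The plan is to reduce everything to the grading structure and to show that strong gradedness is equivalent to having a ``dual basis'' for each $\A_g$ over $\A_e$, which is precisely what is needed to invert $\mathrm{can}$ componentwise.

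For the ``if'' direction, I would start from the strong gradedness hypothesis $\A_g \A_h = \A_{gh}$ and specialize it to $\A_{g^{-1}} \A_g = \A_e$, so that for each $g \in G$ one can choose finite families $\{u_j^g\} \subset \A_{g^{-1}}$ and $\{v_j^g\} \subset \A_g$ with $\sum_j u_j^g v_j^g = 1_\A$. Using these, I would define a candidate inverse
\[
\tau : \A \ots \cH \to \A \ots_{\A_e} \A, \qquad a \ots g \longmapsto \sum\nolimits_j a u_j^g \ots v_j^g ,
\]
extended by linearity in $g$. The verification $\mathrm{can} \circ \tau = \id$ is immediate on elementary tensors. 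For $\tau \circ \mathrm{can} = \id$ it suffices to test on $a \ots b$ with $b \in \A_g$ homogeneous: then $\tau(\mathrm{can}(a\ots b)) = \sum_j a b u_j^g \ots v_j^g$, and since $b u_j^g \in \A_g \A_{g^{-1}} \subseteq \A_e$ one can push this factor across the tensor over $\A_e$ to obtain $a \ots b \sum_j u_j^g v_j^g = a \ots b$.

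For the ``only if'' direction, assuming $\mathrm{can}$ is bijective (surjectivity alone will suffice), I would fix $g \in G$ and use the $G$-grading of the right-hand factor: because $\A_e$ preserves the $G$-degree on both sides, $\A \ots_{\A_e} \A = \bigoplus_h \A \ots_{\A_e} \A_h$, and $\mathrm{can}$ sends the summand with right-degree $h$ into $\A \ots \IC\, h$. Surjectivity onto $1_\A \ots g$ then gives a finite relation $1_\A = \sum_i a_i b_i$ with $b_i \in \A_g$. Projecting each $a_i$ onto its $\A_{g^{-1}}$ homogeneous component (which is legitimate because $a_i b_i \in \bigoplus_k \A_{kg}$ and only the $k = g^{-1}$ summand contributes to $\A_e \ni 1$), I get $1_\A \in \A_{g^{-1}}\A_g$, hence $\A_e = \A_{g^{-1}} \A_g$ for every $g$. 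Strong gradedness then follows in one line from the chain $\A_{gh} = \A_{gh}\A_e = \A_{gh}\A_{h^{-1}}\A_h \subseteq \A_g\A_h$, together with the reverse inclusion that holds for any $G$-graded algebra.

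The step that needs the most care is the well-definedness of $\tau$ and the computation $\tau\circ\mathrm{can}=\id$: one must check that the expression $\sum_j a u_j^g \ots v_j^g$ respects the $\A_e$-balancing in the target and that passing $bu_j^g$ across the tensor is legal, both of which rest on the key observation $\A_g \A_{g^{-1}} \subseteq \A_e$. Everything else is essentially bookkeeping with the $G$-grading, and no analytic or universal-property machinery is required beyond linearity of $\mathrm{can}$.
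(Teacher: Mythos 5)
Your proof is correct and, for the ``if'' direction, is essentially the paper's own argument: strong gradedness is reduced to $\A_{g^{-1}}\A_g=\A_e$ for all $g$, the resulting partitions of unity $\sum_j u_j^g v_j^g=1$ are used to write down $\mathrm{can}^{-1}(a\ots g)=\sum_j a u_j^g\ots v_j^g$, and the verification that this inverts $\mathrm{can}$ rests on pushing the degree-$e$ element $bu_j^g$ across the balanced tensor product, exactly as in the paper (whose displayed formula in fact has the two families in the wrong slots --- your version is the consistent one). The ``only if'' direction, which the paper delegates entirely to \cite[8.1.7]{M93}, you supply correctly by extracting the degree-$g$ coefficient of a preimage of $1\ots g$ and projecting onto homogeneous components to get $1\in\A_{g^{-1}}\A_g$.
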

For the proof, one first notes that $\A$ being strongly graded is equivalent to $\A_g \A_{g^{-1}}=\A_{e}$, for all $g \in G$.
Then one proceeds in constructing an inverse of the canonical map as in \eqref{caninvn}. 
Since, for each $g\in G$, the unit $1_{\A} \in \A_e = \A_{g^{-1}} \A_g$, there exists  
$ \xi_{g^{-1},j} $ in $\A_{g}$ and $\eta_{g,j} \in \A_{g^{-1}}$, such that 
$\sum\nolimits_j  \eta_{g,j} \xi_{g^{-1},j} = 1_\A$. 
Then, $\mathrm{can}^{-1} : \A \ots \cH \to \A \ots_{\A_e} \A$, is given by 
$$
\mathrm{can}^{-1} : a \ots g \mapsto  \sum\nolimits_j a \, \xi_{g^{-1},j} \ots \eta_{g,j} . 
$$
In the particular case of $G=\mathbb{Z}= \wh{U(1)}$, so that $\IC G=\cO(U(1))$, Theorem~\ref{gradpr} 
translates Theorem~\ref{quapriseq} into the following:

\begin{cor}
The datum $\big( \mathcal{A}, \cO(U(1)), \mathcal{A}_{0} \big)$ is a noncommutative principal $U(1)$-bundle if and only if the 
algebra $\mathcal{A}$ is strongly graded over $\IZ$, that is $\mathcal{A}_n\mathcal{A}_m=\mathcal{A}_{n+m}$,  for 
all $n, m \in \IZ$.
\end{cor}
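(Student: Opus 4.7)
My plan is to deduce the corollary as a direct specialization of Theorem~\ref{gradpr} to the case $G=\IZ$. The first step is to set up the dictionary between the two formulations: the Hopf algebra $\cO(U(1))$ is isomorphic to the group Hopf algebra $\IC[\IZ]$ via the map sending $n\in\IZ$ to $z^{-n}$, the sign being chosen so that the comodule structure $\Delta_R(a_n)=a_n\ots z^{-n}$ fixed just above Theorem~\ref{quapriseq} corresponds to the grading coaction $\delta(a_n)=a_n\ots n$ used in Theorem~\ref{gradpr}. Under this identification the two canonical maps literally coincide, so being a noncommutative principal $\cO(U(1))$-bundle is the same as being a noncommutative principal $\IC[\IZ]$-bundle. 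Theorem~\ref{gradpr} then delivers the equivalence with strong gradedness, which for $G=\IZ$ reads exactly as $\A_n\A_m=\A_{n+m}$ for all $n,m\in\IZ$.

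For a reader preferring to avoid invoking Theorem~\ref{gradpr}, I would also sketch a self-contained derivation from Theorem~\ref{quapriseq}. For the `if' direction, strong gradedness gives $\A_1\A_{-1}=\A_0=\A_{-1}\A_1$; writing $1_{\A}$ as a finite sum in each of these products supplies the finite sequences demanded by Theorem~\ref{quapriseq}, so principality holds. For the `only if' direction, Theorem~\ref{quapriseq} yields sequences whose defining identities force $1_{\A}\in\A_{-1}\A_1\cap\A_1\A_{-1}$, hence $\A_0=\A_1\A_{-1}=\A_{-1}\A_1$. A short induction upgrades this to $\A_n\A_{-n}=\A_0$ for every $n\in\IZ$: inserting a writing $1_{\A}=\sum_i\alpha_i\beta_i\in\A_1\A_{-1}$ between the two factors of an existing expression $1_{\A}=\sum_k\mu_k\nu_k\in\A_n\A_{-n}$ produces $1_{\A}=\sum_{k,i}(\mu_k\alpha_i)(\beta_i\nu_k)\in\A_{n+1}\A_{-n-1}$, and symmetrically in negative degrees. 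With $\A_n\A_{-n}=\A_0$ in hand for all $n$, strong gradedness drops out of the chain
\[
\A_{n+m}=\A_0\A_{n+m}\subseteq\A_n\A_{-n}\A_{n+m}\subseteq\A_n\A_m\subseteq\A_{n+m},
\]
the last containment being the general property of a graded algebra.

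I do not foresee a substantive obstacle: once the Hopf algebra identification is in place the corollary is immediate from Theorem~\ref{gradpr}, and the only real content of the alternative route is the inductive step propagating the degree-one frames of Theorem~\ref{quapriseq} to arbitrary degree, which is precisely the feature that makes the $\IZ$-graded case reduce to a single-degree condition.
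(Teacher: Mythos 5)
Your primary argument is exactly the paper's: the corollary is obtained by specializing Theorem~\ref{gradpr} to $G=\IZ$ with $\IC G=\cO(U(1))$, and your care with the sign in the identification $n\mapsto z^{-n}$ correctly matches the coaction conventions. Your optional self-contained route via Theorem~\ref{quapriseq} is also sound, and its key step (propagating $1_{\A}\in\A_{1}\A_{-1}\cap\A_{-1}\A_{1}$ to $\A_n\A_{-n}=\A_0$ for all $n$) is just the standard reduction of strong gradedness to the condition $\A_g\A_{g^{-1}}=\A_e$ that the paper itself invokes inside the proof of Theorem~\ref{gradpr}.
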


In the context of strongly graded algebras, the fact that all right modules $\A_{n}$ for all $n \in \IZ$ are finite projective is a consequence of \cite[Cor.~I.3.3]{NaVO82}.  
 
\subsection{Pimsner algebras from principal circle bundles}

From the considerations above --- and in particular, if one compares \eqref{eq:lss} and Theorem~\ref{quapriseq} ---, it is clear that  a \mbox{\Cs algebra} $A$ is strongly $\IZ$-graded if and only if it carries a circle action with large spectral subspaces. One is then naturally led to consider Pimsner algebras coming from principal circle bundles. The context of Pimsner algebras allows for the use of the six term exact sequences in $KK$-theory,  as described in \S\ref{se:stes}, which relate the $KK$-theories of the total space algebra to that of the base space algebra. 

For commutative algebras this connection was already in \cite[Prop.~ 5.8]{GG13} with the following result: 

\begin{prop}
Let $A$ be a unital, commutative \mbox{\Cs algebra} carrying a circle action. Suppose that the first spectral subspace $E=A_{1}$ is finitely generated projective over $B=A_{0}$. Suppose furthermore that $E$ generates $A$ as a \mbox{\Cs algebra}. Then the following facts hold:
\begin{enumerate}\itemsep=3pt
\item $B=C(X)$ for some compact space $X$;
\item $E$ is the module of sections of some line bundle $L \to X$;
\item $A=C(P)$, where $P \rightarrow X$ is the principal $\IS^1$-bundle over $X$ 
associated with the line bundle $L$, and the circle action on $A$ comes from the principal $\IS^1$-action on $P$.
\end{enumerate}
\end{prop}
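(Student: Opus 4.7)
The argument splits naturally into three stages, each relatively short given the framework developed in the preceding sections.

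\emph{Stage 1 (Gelfand duality).} Since $A$ is unital and commutative, Gelfand--Naimark gives $A \cong C(Y)$ for a unique compact Hausdorff space $Y$, and the strongly continuous circle action on $A$ transfers to a continuous action $\IS^1 \times Y \to Y$. The fixed-point subalgebra $B = A_{0}$ consists of the $\IS^1$-invariant functions on $Y$; since $\IS^1$ is compact, the orbit space $X := Y/\IS^1$ is compact Hausdorff and $B \cong C(X)$. This proves (1).

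\emph{Stage 2 (self-Morita equivalence and line bundle).} A finite frame $\{\eta_j\}$ for $E$ over $B$ produces the (positive, hence self-adjoint) element $p := \sum_j \eta_j \eta_j^* \in B$, which acts as the identity on $E$. By self-adjointness of $p$ and commutativity of $A$, it also acts as the identity on $E^*$, and hence on every product of elements of $E \cup E^*$. Thus $p$ is the unit of the $C^*$-subalgebra $C^*(E \cup E^*) \subseteq A$; invoking the hypothesis that $E$ generates $A$ as a $C^*$-algebra forces $p = 1_A$, so that $A_{1} A_{1}^* = A_{0}$ and symmetrically $A_{1}^* A_{1} = A_{0}$. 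Therefore $(E,\phi)$ with $\phi(b)\xi = b\xi$ is a self-Morita equivalence bimodule over $B$, the circle action has large spectral subspaces, and Theorem~\ref{t:pimcir} identifies $A$ with the Pimsner algebra $\cO_E$. Since $B = C(X)$ is commutative, the isomorphism $\phi : B \xrightarrow{\sim} \cK(E) = \textup{End}_B(E)$ forces the endomorphism algebra to be commutative. Under Serre--Swan one writes $E = \Gamma(X,L)$ for some Hermitian vector bundle $L \to X$, and commutativity of $\Gamma(X,\textup{End}\, L)$ forces the fibres of $L$ to be one-dimensional, i.e.\ $L$ is a line bundle. This gives (2).

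\emph{Stage 3 (principal bundle).} Let $P \to X$ be the unit circle bundle of the Hermitian line bundle $L$. Decomposing $C(P)$ by weights of the canonical $\IS^1$-action yields $C(P)_n \cong \Gamma(X, L^{\otimes n})$, so $C(P)$ is the strongly $\IZ$-graded $C^*$-algebra assembled from the tensor powers of $E$ exactly as in the Pimsner construction. The resulting equivariant isomorphism $C(P) \cong \cO_E \cong A$, pushed through Gelfand duality, becomes an $\IS^1$-equivariant homeomorphism $Y \cong P$ over $X$, which proves (3).

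The key technical step is the equality $p = 1_A$ in Stage 2: it converts the finite projectivity hypothesis into genuine fullness of the bimodule and hence into a bona fide self-Morita equivalence. Without it, $A$ could split off a piece invisible to $E$ (for instance an isolated fixed-point summand), which would obstruct the passage to a principal bundle. Once $p = 1_A$ is in hand, the remainder is essentially a dictionary translation, via Serre--Swan and the universal property of the Pimsner algebra, between the algebraic framework of self-Morita equivalence bimodules and the topological one of principal circle bundles.
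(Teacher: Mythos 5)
The paper does not actually prove this proposition; it is quoted from \cite[Prop.~5.8]{GG13}, so there is no internal argument to compare yours against. Taken on its own terms, your proof is essentially correct and is the natural argument: Gelfand duality for (1), the frame element $p=\sum_j\eta_j\eta_j^*$ forcing fullness and hence a self-Morita equivalence plus Serre--Swan for (2), and the identification of $\cO_E$ with $C(P)$ via the weight decomposition of the circle bundle for (3). The key observation that $p$ is a two-sided identity on the dense $*$-subalgebra generated by $E\cup E^*$, hence equals $1_A$ by the generation hypothesis, is exactly what converts finite projectivity into large spectral subspaces, and you are right to flag it as the crux.

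Two small points to tighten. First, in Stage 2 you assert that commutativity of $\Gamma(X,\textup{End}\,L)$ forces the fibres of $L$ to be one-dimensional; it only forces them to be \emph{at most} one-dimensional, and a component over which the rank is zero is not excluded by commutativity alone. You need injectivity of $\phi$ (equivalently the fullness $A_1^*A_1=A_0$ you have just established, which says the sections of $L$ vanish simultaneously at no point of $X$) to rule out rank zero; since you already have this in hand, it is a one-line fix. Second, the existence of a finite standard module frame for $E$ should be justified: it follows from the standard fact (cf.\ the discussion of frames in \S\ref{frames} and \cite{FL02}) that an algebraically finitely generated projective Hilbert \Cs module over a unital \Cs algebra is unitarily equivalent to $pB^n$ and hence admits a finite frame. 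With these two remarks added, the argument is complete.
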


More generally, let us start with $\mathcal{A} = \oplus_{n \in \IZ} \mathcal{A}_n$ a graded $*$-algebra. 
Denote by $\sigma$ the circle action coming from the grading. In addition, 
suppose there is a \mbox{\Cs norm} on $\A$, and that $\sigma$ is isometric with respect to this norm: 
\begin{equation}
\label{eq:isomaction}
 \| \sigma_{z}(a) \| = \| a \|, \qquad \mbox{ for all } z \in \IS^1, \ a \in \A.
 \end{equation}
Denoting by $A$ the completion of $\mathcal{A}$,  one has the following \cite[\S3.6]{AKL14}:

\begin{lem}\label{l:denspe} 
The action $\{ \sigma_z\}_{z \in \IS^1}$ extends by continuity to a strongly continuous action of $\IS^1$ on $A$. 
Furthermore, each spectral subspace $A_{n}$ for the extended action agrees with the closure of $\mathcal{A}_{n} \subseteq A$. 
\end{lem}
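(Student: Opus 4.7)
The plan is to establish the two claims separately, using the density of $\A$ in $A$ together with the isometry hypothesis \eqref{eq:isomaction} and a Fourier-type spectral projection.

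First I would extend each $\sigma_z$ from $\A$ to $A$. Since $\sigma_z : \A \to \A$ is a $*$-algebra homomorphism that is isometric, it extends uniquely by continuity to an isometric $*$-automorphism of $A$, still denoted $\sigma_z$. The group identities $\sigma_z\sigma_w = \sigma_{zw}$ and $\sigma_1 = \mathrm{id}$ hold on the dense subalgebra $\A$ and so persist on $A$. For strong continuity, I would first verify it on $\A$: a homogeneous element $a \in \A_n$ satisfies $\sigma_z(a) = z^{-n} a$, which is manifestly continuous in $z$, and the general case on $\A$ follows by linearity since $\A = \bigoplus_n \A_n$. Then for $a \in A$ and $\varepsilon > 0$, picking $b \in \A$ with $\|a - b\| < \varepsilon/3$ and using $\|\sigma_z(a-b)\| = \|a-b\|$, one obtains
$$ \|\sigma_z(a) - \sigma_w(a)\| \leq 2\|a-b\| + \|\sigma_z(b) - \sigma_w(b)\| < \varepsilon $$
whenever $|z-w|$ is small enough, giving strong continuity on all of $A$.

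For the second claim, let $\overline{\A_n}$ denote the closure of $\A_n$ in $A$. The inclusion $\overline{\A_n} \subseteq A_n$ is immediate: if $a_k \to a$ with $a_k \in \A_n$, then $\sigma_z(a_k) = z^{-n} a_k \to z^{-n} a$, while $\sigma_z(a_k) \to \sigma_z(a)$ by strong continuity, forcing $\sigma_z(a) = z^{-n} a$. For the reverse inclusion, I would introduce the spectral projection
$$ P_n(a) := \int_{\IS^1} z^n\, \sigma_z(a)\, \mathrm{d}z, $$
which is a well-defined Bochner integral by strong continuity and a contraction on $A$. Invariance of Haar measure gives $\sigma_w(P_n(a)) = w^{-n} P_n(a)$ for all $w \in \IS^1$, so $P_n(A) \subseteq A_n$, and a direct computation on homogeneous elements shows $P_n|_{\A_n} = \mathrm{id}$ and $P_n|_{\A_m} = 0$ for $m \neq n$, hence $P_n(\A) \subseteq \A_n$.

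To conclude, for $a \in A_n$ one has $P_n(a) = a$, so approximating $a$ by $b_k \in \A$ yields $P_n(b_k) \in \A_n$ with
$$ \|a - P_n(b_k)\| = \|P_n(a - b_k)\| \leq \|a - b_k\| \to 0, $$
which places $a$ in $\overline{\A_n}$. The main potential obstacle is ensuring that the spectral projection is well-behaved, in particular that the Bochner integral makes sense and that the Haar-invariance argument yields $P_n(A) \subseteq A_n$; both rely crucially on having established strong continuity in the first step, so the order of the two parts of the proof is essential.
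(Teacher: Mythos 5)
Your argument is correct and is essentially the standard one: the paper itself gives no proof but defers to \cite[\S 3.6]{AKL14}, where the same two steps appear — extension of each $\sigma_z$ by isometry and density, followed by the averaged spectral projection $P_n(a)=\int_{\IS^1} z^n\sigma_z(a)\,\mathrm{d}z$ to identify $A_n$ with $\overline{\mathcal{A}_n}$. No gaps; the only point worth making explicit is that $z\mapsto\sigma_z(a)$ is norm-continuous (which you establish) so the integral defining $P_n$ exists, and that surjectivity of the extended $\sigma_z$ follows from $\sigma_{z^{-1}}$ being its isometric inverse.
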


The left and right Hermitian product as in \eqref{bim0} 
will make each spectral subspace $A_{n}$ a (not necessarily full) Hilbert \mbox{\Cs module} over $A_{0}$. These become full exactly 
when $\A$ is strongly graded. Theorem~\ref{t:pimcir} leads then to:

\begin{prop}\label{t:pimcirsub}
Let $\mathcal{A} = \oplus_{n \in \IZ} \mathcal{A}_{n}$ be a strongly graded $*$-algebra satisfying the  assumptions of Lemma~\ref{l:denspe}. 
Then its $C^*$-closure $A$ is generated, as a \mbox{\Cs algebra}, by $A_1$, and $A$ is isomorphic to the Pimsner algebra 
$\cO_{A_{1}}$ over $A_0$. \end{prop}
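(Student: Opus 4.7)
The plan is to verify the hypotheses of Theorem~\ref{t:pimcir} for the $C^*$-closure $A$, from which the isomorphism $A \cong \cO_{A_{1}}$ follows immediately, and in parallel to establish that $A$ is generated as a \Cs algebra by $A_{1}$. By Lemma~\ref{l:denspe}, the grading-induced action $\sigma$ extends to a strongly continuous action of $\IS^{1}$ on $A$ with each spectral subspace satisfying $A_{n} = \overline{\mathcal{A}_{n}}$. It thus remains to show: (i) the generation of $A$ by $A_{1}$ as a \Cs algebra, and (ii) that $A_{1}$ is a full and finitely (hence countably) generated Hilbert bimodule over $A_{0}$.

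For (i), I first note that since $\sigma$ acts by \mbox{$*$-automorphisms} the grading is $*$-compatible, so that $\mathcal{A}_{n}^{*} = \mathcal{A}_{-n}$. Strong gradedness then yields inductively $\mathcal{A}_{n} = \mathcal{A}_{1}^{n}$ for $n > 0$, $\mathcal{A}_{-n} = (\mathcal{A}_{1}^{*})^{n}$ for $n > 0$, and $\mathcal{A}_{0} = \mathcal{A}_{1}\mathcal{A}_{1}^{*}$. Hence the algebraic direct sum $\mathcal{A}$ lies in the $*$-subalgebra generated by $\mathcal{A}_{1}$, and since $\mathcal{A}$ is norm-dense in $A$ by definition of the completion, $A$ is generated as a \Cs algebra by $A_{1}$.

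For (ii), strong gradedness supplies an algebraic witness for a finite frame: from $1_{\mathcal{A}} \in \mathcal{A}_{0} = \mathcal{A}_{1}\mathcal{A}_{-1}$ there exist finite sequences $\{\xi_{j}\}_{j=1}^{N} \subset \mathcal{A}_{1}$ and $\{\eta_{j}\}_{j=1}^{N} \subset \mathcal{A}_{-1}$ with $\sum_{j} \xi_{j}\eta_{j} = 1_{\mathcal{A}}$. Setting $\tilde{\eta}_{j} := \eta_{j}^{*} \in \mathcal{A}_{1}$, the reconstruction $\zeta = \sum_{j}\xi_{j}\,\hs{\tilde{\eta}_{j}}{\zeta}$ for $\zeta \in \mathcal{A}_{1}$, using the right $\mathcal{A}_{0}$-valued product of \eqref{bim0}, exhibits $\{\xi_{j},\tilde{\eta}_{j}\}$ as a pair of dual frames in the sense of \eqref{eq:dualframes}; this descends to the closures and shows $A_{1}$ is finitely generated projective over $A_{0}$. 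Fullness is then immediate: $\overline{A_{1}A_{1}^{*}} \supseteq \overline{\mathcal{A}_{1}\mathcal{A}_{-1}} = \overline{\mathcal{A}_{0}} = A_{0}$, and analogously $\overline{A_{1}^{*}A_{1}} = A_{0}$.

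With (i) and (ii) in hand, Theorem~\ref{t:pimcir} identifies $A$ with the Pimsner algebra $\cO_{A_{1}}$ via $\xi \in A_{1} \mapsto S_{\xi}$. The main obstacle I anticipate is the passage from the purely algebraic strong-grading identities at the level of $\mathcal{A}$ to the topological fullness of $A_{1}$ as a Hilbert bimodule over $A_{0}$; but this is precisely what the isometry condition in Lemma~\ref{l:denspe} secures, by guaranteeing that the closures of the $\mathcal{A}_{n}$ inside $A$ match the spectral subspaces, so that algebraic equalities between the $\mathcal{A}_{n}$'s close up to the corresponding equalities of norm-closed subspaces in $A$.
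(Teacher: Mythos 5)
Your proof is correct and follows the same route the paper intends: Lemma~\ref{l:denspe} to identify the spectral subspaces with the closures of the $\mathcal{A}_n$, strong gradedness to produce a finite dual frame and hence fullness (and countable generation) of $A_1$ over $A_0$, and then Theorem~\ref{t:pimcir} to conclude. The paper leaves these verifications implicit (it only remarks that the $A_n$ become full exactly when $\mathcal{A}$ is strongly graded and that Theorem~\ref{t:pimcir} then applies), so your argument simply supplies the details of that same approach, correctly.
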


\section{Examples}\label{se:4}
As illustrated by Proposition~\ref{t:pimcirsub},
\mbox{\Cs algebras} coming from noncommutative principal circle bundles provide a natural class of examples of Pimsner algebras. In this section, we describe in details some classes of examples.

\subsection{Quantum weighted projective and quantum lens spaces}
Let $0<q<1$.
We recall from \cite{VS91} that the coordinate algebra of the unit quantum sphere $\IS^{2n+1}_q$ is the $*$-algebra $\A(\IS^{2n+1}_q)$ generated by 
$2n+2$ elements 
$\{z_i,z_i^*\}_{i=0,\ldots,n}$ subject to the relations:
\begin{align*} 
z_iz_j &=q^{-1}z_jz_i && 0\leq i<j\leq n \;,  \\
z_i^*z_j &=qz_jz_i^*  &&  i\neq j \;,  \\
[z_n^*,z_n] =0, \qquad [z_i^*,z_i] &=(1-q^2)\sum_{j=i+1}^n z_jz_j^* && i=0,\ldots,n-1 \;,   
\end{align*}
and a sphere relation:
$$
z_0z_0^*+z_1z_1^* +\ldots+z_nz_n^*=1 \;.
$$
The notation of \cite{VS91} is obtained by setting $q=e^{h/2}$.

A \emph{weight vector} $\mv{\ell}=(\ell_0,\dots,\ell_n)$ is a finite sequence of positive integers, called \emph{weights}. A weight vector is said to be \emph{coprime} if $\gcd(\ell_0,\dots,\ell_n)=1$; and it is \emph{pairwise coprime} if $\gcd(\ell_i,\ell_j)=1$, for all $i\neq j$.

For any coprime weight vector $\mv{\ell}=(\ell_0,\dots,\ell_n)$, one defines a \mbox{$\IZ$-grading} on the coordinate algebra $\mathcal{A}(\IS^{2n+1}_q)$ by declaring each $z_i$ to be of degree $\ell_i$ and $z_i^*$ of degree $-\ell_i$. 
This grading is equivalent to the one associated with the weighted circle action on the quantum sphere given by
\begin{equation}
\label{wu1act}
(z_0,z_1, \dots, z_n) \mapsto (\lambda^{\ell_0} z_0, \lambda^{\ell_1} z_1, \dots, \lambda^{\ell_n} z_n ), \qquad \lambda \in \IS^1.
\end{equation}

The degree zero part or, equivalently, the fixed point algebra for the action, is the coordinate algebra of the \emph{quantum $n$-dimensional weighted projective space} associated with the weight vector $\mv{\ell}$, and denoted by $\mathcal{A}(\WP_q^n(\mv{\ell}))$. 

\begin{rem}
For $\mv{\ell}=(1,\ldots,1)$ one gets 
the coordinate algebra $\A(\mathbb{CP}^n_q)$ of the ``un-weighted'' quantum projective space $\CP^n_q$. This is the  $*$-subalgebra of $\A(\IS^{2n+1}_q)$ generated by the elements $p_{ij}:=z_i^*z_j$ for $i,j=0,1,\ldots,n$.
\end{rem}

For a fixed positive integer $N$, one defines the coordinate algebra of the quantum lens space $\mathcal{A}(L^{2n+1}_q(N;\mv{\ell}))$ as
\begin{equation}
\mathcal{A}(L^{2n+1}_q(N; \mv{\ell})) := \mathcal{A}(\IS^{2n+1}_q)^{\IZ_N} 
\end{equation}
with the same decomposition and notation as in \eqref{eq:Zd}. 
Equivalently, $\mathcal{A}(L^{2n+1}_{q} (N; \mv{\ell}))$ is the invariant subalgebra of $\mathcal{A}(\IS^{2n+1}_q)$ with respect to the cyclic action obtained by restricting \eqref{wu1act} to the subgroup $\IZ_{N} \subseteq \IS^1$.

\begin{prop}[\textnormal{\emph{cf.} \cite{AKL14,DAL14,BF14}}]
For all weight vectors $\mv{\ell}$, let $N_{\mv{\ell}}:=\prod_{i}\ell_i$. Then the triple $ \big( \mathcal{A}(L^{2n+1}_{q} (N_{\mv{\ell}}; \mv{\ell})), \cO(U(1)), \mathcal{A}(\WP^n_q(\mv{\ell})) \big)$ is a noncommutative principal $U(1)$-bundle.
\end{prop}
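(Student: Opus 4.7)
The plan is to apply Theorem~\ref{quapriseq} to the $\IZ$-grading on the lens space algebra induced by the weighted circle action~\eqref{wu1act}, by exhibiting finite sequences of elements in degrees $\pm 1$ whose bilinear sums give $1_{\A}$. First I would identify the spectral subspaces explicitly: an element of $\A(\IS^{2n+1}_q)$ has weighted $\mv{\ell}$-degree $m$ if it is a linear combination of monomials $z_0^{a_0}\cdots z_n^{a_n}(z_0^*)^{b_0}\cdots(z_n^*)^{b_n}$ with $\sum_i \ell_i(a_i-b_i)=m$. Invariance under $\IZ_{N_{\mv{\ell}}}\subset \IS^1$ amounts to $N_{\mv{\ell}} \mid m$, and the quotient $m/N_{\mv{\ell}}$ is precisely the grading degree on the lens space; the degree-zero piece is $\A(\WP^n_q(\mv{\ell}))$ by definition, while the degree-$k$ piece $\A(L^{2n+1}_q(N_{\mv{\ell}};\mv{\ell}))_k$ consists of the elements of the quantum sphere of weighted degree $kN_{\mv{\ell}}$.

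Next I would construct explicit frames. Since $N_{\mv{\ell}}=\prod_i \ell_i$, each $N_i:=N_{\mv{\ell}}/\ell_i$ is a positive integer, so that the monomials $\xi_i:=z_i^{N_i}$ lie in the weight-$(+1)$ subspace and their adjoints $(z_i^*)^{N_i}$ lie in the weight-$(-1)$ subspace, for $i=0,1,\dots,n$. The natural ansatz for the first frame identity of Theorem~\ref{quapriseq} is then $\eta_i=f_i(z_i^*)^{N_i}$ with coefficients $f_i$ in the commutative $*$-subalgebra of $\A(\WP^n_q(\mv{\ell}))$ generated by the mutually commuting elements $y_j:=z_jz_j^*$, which satisfy the sphere relation $\sum_j y_j = 1_{\A}$. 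Using the $q$-commutation relations one writes each $(z_i^*)^{N_i}z_i^{N_i}$ as an explicit polynomial in the $y_j$'s: for instance, iterating $z_0^*z_0=1-q^2S_1$ (with $S_k:=\sum_{j\geq k}y_j$) together with $z_0^*S_1=q^2S_1z_0^*$ yields
\[
(z_0^*)^{N_0}z_0^{N_0}=\prod_{s=1}^{N_0}\bigl(1-q^{2s}S_1\bigr),
\]
with $(z_n^*)^{N_n}z_n^{N_n}=y_n^{N_n}$ at the other extreme and analogous but more involved polynomial expressions for the intermediate~$i$. The scalars $f_i$ are then fixed by solving the resulting identity in the commutative polynomial ring generated by the $y_j$'s modulo the sphere relation, producing $\sum_i \eta_i \xi_i = 1_\A$. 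A symmetric construction, with the opposite ordering played against the equivalent relation $\sum_i z_iz_i^* = 1_\A$, yields the second identity $\sum_i \alpha_i\beta_i=1_\A$ required by Theorem~\ref{quapriseq}.

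The main obstacle I expect is the combinatorial bookkeeping in determining the coefficients $f_i$: the slick telescoping that produces $\sum_{i=0}^n q^{2i} z_i^* z_i = 1_\A$ in the unweighted case does not carry over in closed form once $N_i>1$, and one has to manipulate $q$-binomial expansions of products such as $\prod_s\bigl(1-q^{2s}S_{i+1}\bigr)$. The hypothesis that $N$ be exactly $\prod_i\ell_i$, and not a smaller multiple of the $\ell_i$'s, is what I expect to use crucially: it makes each exponent $N_i$ integral so that the candidate frames $z_i^{N_i}$ are defined, and --- dually --- it is what guarantees that the residual $\IS^1$-action on the lens space is free at the algebraic level, mirroring the classical fact that $\IZ_{N_{\mv{\ell}}}$ contains all the stabilizers of the weighted $\IS^1$-action on $\IS^{2n+1}$. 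Explicit frames of the kind sketched above have been written out in~\cite{AKL14} for $n=1$ and in~\cite{DAL14, BF14} in full generality, to which one appeals to conclude.
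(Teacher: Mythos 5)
Your proposal is correct and follows essentially the same route as the paper, which itself offers no proof beyond deferring to \cite{AKL14,DAL14,BF14}: there, exactly as you outline, principality is established via Theorem~\ref{quapriseq} by exhibiting frame elements built from the monomials $z_i^{N_{\mv{\ell}}/\ell_i}$ and coefficients polynomial in the commuting elements $z_jz_j^*$, with the identity $(z_0^*)^{N_0}z_0^{N_0}=\prod_{s=1}^{N_0}(1-q^{2s}S_1)$ (and its analogues $\prod_{s}(S_i-q^{2s}S_{i+1})$ for intermediate $i$) as the key computation. The combinatorial step you flag --- showing that $1$ lies in the ideal generated by these products, by downward induction on $i$ starting from $(z_n^*)^{N_n}z_n^{N_n}=(z_nz_n^*)^{N_n}$ --- is precisely what is carried out in the cited references, so your deferral matches the paper's own treatment.
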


\begin{ex}
A class of examples of the above construction is that of multidimensional teardrops \cite{BF14}, that are obtained for the weight vector $\mv{\ell}=(1,\dots, 1, m)$ having all but the last entry equal to 1. 
\end{ex}

Fix an integer $d \geq 1$. From Proposition~\ref{p:quaprifin} applied to the algebra
$\A(L^{2n+1}_q(dN_{\mv{\ell}}; \mv{\ell}))=\A(L^{2n+1}_q(N_{\mv{\ell}}; \mv{\ell}))^{\IZ_{d}}$ 
we get:

\begin{prop}\label{prop:5.4}
The datum $ \big(\, \mathcal{A}(L^{2n+1}_{q} (d N_{\mv{\ell}}; \mv{\ell} )) \,,\, \cO(U(1)) \,,\, \mathcal{A}(\WP^n_q(\mv{\ell})) \,\big)$ is a noncommutative principal $U(1)$-bundle for all integers $d \geq 1$.
\end{prop}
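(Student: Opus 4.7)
The plan is to realize this as a direct application of Proposition~\ref{p:quaprifin} to the $\IZ$-graded algebra $\A := \mathcal{A}(L^{2n+1}_{q}(N_{\mv{\ell}}; \mv{\ell}))$, whose principality over $\mathcal{A}(\WP^n_q(\mv{\ell}))$ is already the content of the preceding proposition. The crux is to identify the lens space at level $dN_{\mv{\ell}}$ with the $\IZ_d$-invariant subalgebra of the lens space at level $N_{\mv{\ell}}$, which exhibits it as an algebra of the form $\A^{\IZ_d}$ in the sense of \eqref{eq:Zd}.

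First, I would unpack the $\IZ$-grading on $\A(\IS^{2n+1}_q)$ induced by the weighted circle action \eqref{wu1act}: a monomial $z^{\alpha}(z^{*})^{\beta}$ has degree $\sum_i \ell_i(\alpha_i - \beta_i)$. By definition, $\A(L^{2n+1}_q(N; \mv{\ell})) = \A(\IS^{2n+1}_q)^{\IZ_N}$ consists precisely of the homogeneous components whose degree is divisible by $N$. Hence, setting $N = N_{\mv{\ell}}$, the lens space algebra inherits its own $\IZ$-grading
$$
\A(L^{2n+1}_q(N_{\mv{\ell}}; \mv{\ell}))_n \;=\; \A(\IS^{2n+1}_q)_{N_{\mv{\ell}} n}, \qquad n \in \IZ,
$$
and this is precisely the grading which realizes it, via the preceding proposition, as the total space algebra of a noncommutative principal $U(1)$-bundle over $\A(\WP^n_q(\mv{\ell}))$.

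Next, I would verify the identity
$$
\mathcal{A}(L^{2n+1}_{q}(dN_{\mv{\ell}}; \mv{\ell})) \;=\; \bigoplus_{n \in \IZ} \A(L^{2n+1}_q(N_{\mv{\ell}}; \mv{\ell}))_{dn} \;=\; \A^{\IZ_d},
$$
where the first equality reduces to the chain $\A(\IS^{2n+1}_q)^{\IZ_{dN_{\mv{\ell}}}} = \bigoplus_k \A(\IS^{2n+1}_q)_{dN_{\mv{\ell}} k}$, and the second is just the notation \eqref{eq:Zd} applied to $\A$. Once this identification is established, Proposition~\ref{p:quaprifin} applies verbatim to $\A$, and the conclusion follows: $\big(\A^{\IZ_d}, \cO(U(1)), \A_0\big)$ is a noncommutative principal $U(1)$-bundle, with $\A_0 = \mathcal{A}(\WP^n_q(\mv{\ell}))$.

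I don't expect serious obstacles: the only point requiring care is the bookkeeping of the two nested cyclic actions (the $\IZ_{N_{\mv{\ell}}}$ used to cut out the ``small'' lens space and the $\IZ_d$ used in Proposition~\ref{p:quaprifin}), which compose to give the $\IZ_{dN_{\mv{\ell}}}$ action defining the ``large'' lens space. Concretely, the frames witnessing principality in Theorem~\ref{quapriseq} for the new bundle can be written down explicitly from those of the base case: if $\{\xi_j\}$, $\{\eta_j\}$, $\{\beta_i\}$, $\{\alpha_i\}$ are frames in $\A_{\pm 1}$ for the bundle at level $N_{\mv{\ell}}$, then the ordered products of length $d$ provide frames in $\A_{\pm d} = \A(L^{2n+1}_q(dN_{\mv{\ell}};\mv{\ell}))_{\pm 1}$, exactly as constructed in the proof of Proposition~\ref{p:quaprifin}.
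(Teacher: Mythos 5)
Your proposal is correct and follows essentially the same route as the paper: the paper's argument is precisely to write $\mathcal{A}(L^{2n+1}_q(dN_{\mv{\ell}}; \mv{\ell}))=\mathcal{A}(L^{2n+1}_q(N_{\mv{\ell}}; \mv{\ell}))^{\IZ_{d}}$ and apply Proposition~\ref{p:quaprifin} to the level-$N_{\mv{\ell}}$ bundle, whose principality is the content of the preceding proposition. Your additional bookkeeping of the nested gradings and the length-$d$ product frames simply makes explicit what the paper leaves implicit.
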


Let $C(\IS^{2n+1}_q)$ be the $C^*$-completion of $\A(\IS^{2n+1}_q)$ in the universal $C^*$-norm. By universality, one extends the $\IS^1$ weighted action to the $C^*$-algebra and defines $C(\WP^n_q(\mv{\ell}))$ and $C(L^{2n+1}_{q}(N;\mv{\ell}))$ as the fixed point $C^*$-subalgebras for this action of $\IS^1$ and of the subgroup $\mathbb{Z}_N\subset\IS^1$, respectively.
At least for $n=1$ the fixed point $C^*$-subalgebras coincide with the corresponding 
universal enveloping $C^*$-algebras (\emph{cf.} Lemma 5.6 and Lemma 6.7 of \cite{AKL14}).

Let
$E$ be the first spectral subspace of $C(\IS^{2n+1}_q)$ for the weighted action of $\IS^1$. From Prop.~\ref{prop:5.4} we then get:

\begin{prop} For any $d\geq 1$, the $C^*$-algebra
$C(L^{2n+1}_{q} (dN_{\mv{\ell}}; \mv{\ell}))$ is the Pimsner algebra over $C(\WP^n_q(\mv{\ell}))$ associated to the module $E$ above.
\end{prop}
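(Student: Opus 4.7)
The plan is to reduce the statement to a direct application of Proposition~\ref{t:pimcirsub}, using Proposition~\ref{prop:5.4} to supply the strong-graded hypothesis and the universal property of the $C^*$-norm on $C(\IS^{2n+1}_q)$ to supply the isometric-action hypothesis.

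First, I would invoke Proposition~\ref{prop:5.4}: the triple $(\mathcal{A}(L^{2n+1}_{q}(dN_{\mv{\ell}};\mv{\ell})), \cO(U(1)), \mathcal{A}(\WP^n_q(\mv{\ell})))$ is a noncommutative principal $U(1)$-bundle for every $d\geq 1$. By the Corollary after Theorem~\ref{gradpr}, principality of this $\cO(U(1))$-coaction is equivalent to the $\IZ$-grading on $\mathcal{A}(L^{2n+1}_{q}(dN_{\mv{\ell}};\mv{\ell}))$ being strong. In this grading, the degree-$k$ component is exactly $\mathcal{A}(\IS^{2n+1}_q)_{k\cdot dN_{\mv{\ell}}}$ (the $(k\cdot dN_{\mv{\ell}})$-th spectral subspace under the full weighted action), and the degree-$0$ part is $\mathcal{A}(\WP^n_q(\mv{\ell}))$.

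Next, I would verify the hypothesis \eqref{eq:isomaction} of Lemma~\ref{l:denspe}. The universal $C^*$-norm on $\mathcal{A}(\IS^{2n+1}_q)$ is automatically invariant under the weighted $\IS^1$-action, since the latter acts by $*$-automorphisms of the $*$-algebra; hence the extended action on $C(\IS^{2n+1}_q)$ is strongly continuous and isometric. Restricting this action to the subgroup $\IZ_{dN_{\mv{\ell}}} \subset \IS^1$ yields the $C^*$-subalgebra $C(L^{2n+1}_{q}(dN_{\mv{\ell}};\mv{\ell}))$, on which the residual $\IS^1/\IZ_{dN_{\mv{\ell}}} \cong \IS^1$ action is again isometric, so \eqref{eq:isomaction} holds. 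By Lemma~\ref{l:denspe} the residual $\IS^1$-action extends continuously and its spectral subspaces are the norm-closures of the algebraic ones.

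With these two ingredients in hand, Proposition~\ref{t:pimcirsub} applies verbatim to $\mathcal{A} = \mathcal{A}(L^{2n+1}_{q}(dN_{\mv{\ell}};\mv{\ell}))$ with $C^*$-closure $A = C(L^{2n+1}_{q}(dN_{\mv{\ell}};\mv{\ell}))$: the latter is generated as a $C^*$-algebra by its first spectral subspace $A_1$ and is isomorphic, via $S_\xi \mapsto \xi$, to the Pimsner algebra $\cO_{A_1}$ over $A_0 = C(\WP^n_q(\mv{\ell}))$. Finally, identifying $A_1$ with the module $E$ referenced in the statement reduces to observing that, as a Hilbert $C(\WP^n_q(\mv{\ell}))$-bimodule, $A_1$ is the norm-closure of the spectral subspace $\mathcal{A}(\IS^{2n+1}_q)_{dN_{\mv{\ell}}}$ coming from the original weighted action on $C(\IS^{2n+1}_q)$.

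The proof is essentially bookkeeping; the only point requiring mild care is the isometry of the action (needed to apply Lemma~\ref{l:denspe}), which is handled by appealing to universality of the $C^*$-norm on $C(\IS^{2n+1}_q)$. There is no serious obstacle: all the substantive work has been done in Propositions~\ref{prop:5.4} and~\ref{t:pimcirsub} and in the corollary characterizing principal $U(1)$-coactions in terms of strong $\IZ$-gradings.
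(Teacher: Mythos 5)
Your proof is correct and follows exactly the route the paper intends: the proposition is stated there with no argument beyond ``From Prop.~\ref{prop:5.4} we then get'', i.e.\ principality of the coordinate-algebra bundle (equivalently, strong $\IZ$-grading) combined with the isometry of the circle action, Lemma~\ref{l:denspe}, and Proposition~\ref{t:pimcirsub}, which is precisely the bookkeeping you carry out. The one caveat is the identification of the generating module: what your argument actually produces is the first spectral subspace of $C(L^{2n+1}_{q}(dN_{\mv{\ell}};\mv{\ell}))$, namely the closure of $\mathcal{A}(\IS^{2n+1}_q)_{dN_{\mv{\ell}}}$, which agrees with the paper's $E$ (defined as the \emph{first} spectral subspace of $C(\IS^{2n+1}_q)$) only after the reindexing implicit in the statement --- an imprecision of the paper rather than a gap in your proof.
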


\noindent
As particular cases, $C(\IS^{2n+1}_q)$ is a Pimsner algebra over $C(\CP^n_q)$, and more generally $C(L^{2n+1}_q(d; \mv{1}))$ is a Pimsner algebra over $C(\CP^n_q)$ for any $d\geq 1$.

\subsection{Twisting of graded algebras}
A second class of examples comes from twisting the product of the algebras of a given principal bundle by an automorphism.
Then let $\mathcal{A} = \oplus_{n \in \IZ} \mathcal{A}_n$ be a $\IZ$-graded unital $*$-algebra.

\begin{defn}
Let $\gamma$ be a graded unital $*$-automorphism of $\A$.
A new unital graded $*$-algebra $(\A,\star_\gamma)=:\B=\bigoplus_{n\in\IZ}\B_n$ is defined as follows: $\B_n=\A_n$
as a vector space, the involution is unchanged, and the product defined by:
\begin{equation}\label{tp}
a\star_\gamma b=\gamma^n(a)\gamma^{-k}(b) \;,\qquad\mbox{ for all } \;a\in\B_k,\,b\in\B_n,
\end{equation}
where the product on the right hand side is the one in $\A$.
\end{defn}

\noindent
It is indeed straightforward to check that the new product satisfies
\begin{itemize}\itemsep=0pt
\item[i)] associativity: for all $a\in\A_k$, $b\in\A_m$, $c\in\A_n$ it holds that 
$$
(a\star_\gamma b)\star_\gamma c=a\star_\gamma (b\star_\gamma c)
=\gamma^{m+n}(a)\gamma^{n-k}(b)\gamma^{-k-m}(c) ,
$$
\item[ii)] $(a\star_\gamma b)^*=b^*\star_\gamma a^*$, for all $a,b$. 
 \end{itemize}
Furthermore, the unit is preserved, that is: $1\star_\gamma a=a\star_\gamma 1=a$ for all $a$ and the degree zero
subalgebra as undeformed product: $\B_0=\A_0$. Finally,  
$$
a\star_\gamma \xi=\gamma^n(a)\xi \;, \qquad \xi\star_\gamma a=\xi\gamma^{-n}(a) \; , \qquad
\mbox{ for all }  a\in\B_0,\,\xi\in\B_n . 
$$
Thus the left $\B_0$-module structure of $\B_n$ is the one of $\A_n$ twisted with $\gamma^n$,
and the right $\B_0$-module structure is the one of $\A_n$ twisted with $\gamma^{-n}$. 

We write this as
$\B_n={_{\gamma^n}}(\A_n){_{\gamma^{-n}}}$. 

\begin{rem}
For the particular case when $\A$ is commutative, from the deformed product \eqref{tp} one gets commutation rules:
\begin{equation}\label{eq:comm}
a\star_\gamma b=\gamma^{-2k}(b)\star_\gamma\gamma^{2n}(a) \;, 
\end{equation}
for all $a\in\B_k,\,b\in\B_n$.
\end{rem}

\begin{thm}
Assume the datum $\big( \mathcal{A}, \cO(U(1)), \mathcal{A}_{0} \big)$ is a noncommutative principal $U(1)$-bundle. Then, the datum $\big( \mathcal{B}, \cO(U(1)), \mathcal{A}_{0} \big)$ is a noncommutative principal $U(1)$-bundle as well.
\end{thm}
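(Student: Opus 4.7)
The plan is to verify the criterion of Theorem~\ref{quapriseq}: produce, inside the twisted algebra $\B$, two finite pairs of sequences $\{\xi'_j\}\subset\B_1, \{\eta'_j\}\subset\B_{-1}$ and $\{\beta'_i\}\subset\B_1, \{\alpha'_i\}\subset\B_{-1}$ satisfying the identities \eqref{fr1u} in the deformed product $\star_\gamma$. Since $\big(\mathcal{A},\cO(U(1)),\mathcal{A}_{0}\big)$ is already a noncommutative principal $U(1)$-bundle, that theorem supplies sequences $\{\xi_j\},\{\beta_i\}\subset\A_1$ and $\{\eta_j\},\{\alpha_i\}\subset\A_{-1}$ with $\sum_j\eta_j\xi_j=1_\A=\sum_i\alpha_i\beta_i$ (product in $\A$).

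First, I would observe that the $\cO(U(1))$-coaction on $\B$ coming from its grading is well-defined and compatible with $\star_\gamma$: since $\B_n=\A_n$ as vector spaces and $\star_\gamma$ preserves total degree, the map $b\in\B_n\mapsto b\otimes z^{-n}$ is a homomorphism from $(\B,\star_\gamma)$ to $\B\otimes\cO(U(1))$, and the coinvariants coincide with $\A_0=\B_0$. This ensures the setup of Definition~\ref{de:can} is in place and we only need to exhibit the frame data.

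Next, I would take the same elements $\xi_j,\eta_j,\alpha_i,\beta_i$ and view them inside $\B$. Using the definition \eqref{tp} with $k=-1$, $n=1$, together with the fact that $\gamma$ is a unital $*$-automorphism of the original algebra $\A$ (so in particular a homomorphism of the undeformed product preserving $1$), one computes
\[
\eta_j \star_\gamma \xi_j \;=\; \gamma(\eta_j)\,\gamma(\xi_j) \;=\; \gamma(\eta_j\xi_j),
\]
and consequently
\[
\sum\nolimits_j \eta_j \star_\gamma \xi_j \;=\; \gamma\!\Big(\sum\nolimits_j \eta_j\xi_j\Big) \;=\; \gamma(1_\A) \;=\; 1_\B.
\]
The same calculation applied to $\alpha_i,\beta_i$ gives $\sum_i \alpha_i\star_\gamma\beta_i = 1_\B$. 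The frame identities \eqref{fr1u} are thus satisfied in $\B$, and Theorem~\ref{quapriseq} applied in the reverse direction yields principality of $\big(\B,\cO(U(1)),\A_{0}\big)$.

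There is essentially no obstacle: the only nontrivial input is that $\gamma$ preserves $1$ and is multiplicative for the original product, which are built into the hypothesis that $\gamma$ is a graded unital $*$-automorphism. If one wished to avoid invoking $\gamma(1)=1$ or to have frame elements whose deformed product directly gives $\sum\eta_j\xi_j$, one could equivalently use the twisted frames $\xi'_j:=\gamma^{-1}(\xi_j)$, $\eta'_j:=\gamma^{-1}(\eta_j)$ (and similarly for $\alpha,\beta$), for which $\eta'_j\star_\gamma \xi'_j = \eta_j\xi_j$; the resulting sum is again $1$.
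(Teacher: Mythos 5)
Your proposal is correct and follows essentially the same route as the paper: both verify the frame criterion of Theorem~\ref{quapriseq} for $(\B,\star_\gamma)$ using the frames supplied by principality of $\A$, the only (cosmetic) difference being that the paper twists the frame elements by $\gamma^{\pm1}$ so that the deformed products literally equal the undeformed ones, whereas you keep the original elements and observe that the deformed products give $\gamma(1_\A)=1$. Your closing remark about using $\gamma^{-1}$-twisted frames is in fact exactly the paper's choice.
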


\begin{proof}
With the notation of Thm. \ref{quapriseq}, denoting  $\alpha_i^\gamma =\gamma^{-1}(\alpha_i) ,
\ \beta_i^\gamma=\gamma^{-1}(\beta_i) \ $, $\xi_i^\gamma=\gamma(\xi_i)$ and $\eta_i^\gamma=\gamma(\eta_i)$,
the collections $\{\xi_i^\gamma\}_{i=1}^N,\{\beta_i^\gamma\}_{i=1}^M\subset\B_1$ and $\{\eta_i^\gamma\}_{i=1}^N,\{\alpha_i^\gamma\}_{i=1}^M\subset\B_{-1}$ are such that:
$$
\sum\nolimits_{i=1}^N\xi_i^\gamma\star_\gamma\eta_i^\gamma=
\sum\nolimits_{i=1}^N\xi_i\eta_i=1
\;,\quad
\sum\nolimits_{i=1}^M\alpha_i^\gamma\star_\gamma\beta_i^\gamma=
\sum\nolimits_{i=1}^M\alpha_i\beta_i=1 \;.
$$
Hence the thesis, when applying Theorem~\ref{quapriseq}.
\end{proof}

\begin{rem}
There is an isomorphism of bimodules ${_{\gamma^n}}(\A_n){_{\gamma^{-n}}} \simeq {_{\gamma^{2n}}}(\A_n){_{\id}}$, 
implemented by the map $a\mapsto\gamma^n(a)$, for $a\in \A_n$. This map intertwines the deformed 
product $\star_\gamma$ with a new product
$$
a\star_{\gamma}' b=\gamma^{2n}(a)b \;,\qquad\text{for all} \;a\in\B_k,\,b\in\B_n ,
$$
and the undeformed involution with a new involution:
$$
a^{\dagger} = \gamma^{-2n}(a^*), \quad \mbox{ for all }  a\in \mathcal{B}_n .
$$
By construction $(\A,\star_\gamma)$ is isomorphic to $(\A,\star_\gamma')$ with deformed involution.

Suppose that $\A$ is dense in a graded $C^*$-algebra $A$ and $\gamma$ extends to a $C^*$-automorphism. Then, the completion $E_n$ of ${_{\gamma^{2n}}}(\A_n){_{\id}}$ becomes a self-Morita equivalence $A_0$-bimodule in the sense of Def.~\ref{de:sme} (with $\phi=\gamma^{2n}$), and the completion of $\B$ is the Pimsner algebra over $A_0$ associated to $E_{_1}={_{\gamma^{2}}}(\A_1){_{\id}}$. This is very similar to the construction in Example \ref{ex:3.3}. 
\end{rem} 

Examples of the above construction are noncommutative tori and $\theta$-deformed spheres and lens spaces, which we recall next.

\subsubsection{The noncommutative torus}
Being a crossed product, the noncommutative torus $C(\mathbb{T}^2_\theta) \simeq C(\IS^1) \rtimes_{\alpha} \IZ$ can be naturally seen as a Pimsner algebra over $C(\IS^1)$. The automorphism $\alpha$ of $C(\IS^1)$ is the one induced by the $\mathbb{Z}$-action generated by a rotation by $2\pi \ii \theta$ on $\IS^1$.
As a preparation for the examples of next section, let us see how it emerges from the deformed construction of the previous section.
 
Let $\A=\A(\IT^2)$ be the commutative unital $*$-algebra generated by
two unitary elements $u$ and $v$.
This is graded by assigning to $u,v$ degree $+1$ and to their adjoints degree $-1$.
The degree zero part is $\A_0\simeq\A(\IS^1)$, generated by the unitary $u^*v$.
Let $\theta\in\mathbb{R}$ and $\gamma$ be the graded $*$-automorphism given by
$$
\gamma_\theta(u)=e^{2 \pi \ii \theta}u \;,\qquad
\gamma_\theta(v)=v \;.
$$
From \eqref{eq:comm} we get
$$
u\star_{\gamma_\theta} v =e^{2\pi \ii \theta} v \star_{\gamma_\theta} u \;,
$$
plus the relations
$$
u\star_{\gamma_\theta} u^*=u^*\star_{\gamma_\theta} u=1 \;,\qquad
v\star_{\gamma_\theta} v^*=v^*\star_{\gamma_\theta} v=1 \;.
$$
Thus the deformed algebra $\B := (\A, \star_{\gamma_\theta}) = \A(\IT^2_\theta)$
is the algebra of the noncommutative torus.
 
\subsubsection{$\theta$-deformed spheres and lens spaces}
Let $\A=\A(\IS^{2n+1})$ be the commutative unital $*$-algebra generated by elements
$z_0,\ldots,z_n$ and their adjoints, with relation $\sum_{i=0}^n z_i^* z_i=1$.
This is graded by assigning to $z_0,\ldots,z_n$ degree $+1$ and to their adjoints degree $-1$.
For this grading the degree zero part is $\A_0\simeq\A(\mathbb{CP}^n)$. 
We denote by $\gamma$ the corresponding $\IS^1$-action on $\mathcal{A}$.

Any matrix $(u_{ij})\in U(n+1)$ defines a graded $*$-automorphism $\gamma$ by
$$
\gamma_u(z_i)=\sum\nolimits_{j=0}^nu_{ij}z_j \;,\qquad \;i=0,\ldots,n.
$$
Since a unitary matrix can be diagonalized by a unitary transformation, one
can assume that $(u_{ij})$ is diagonal. Denote $\lambda_{ij}=u_{ii}^2\bar u_{jj}^2$; from \eqref{eq:comm} one gets
$$
z_i\star_{\gamma_u} z_j  =\lambda_{ij}\, z_j\star_{\gamma_u} z_i \;,\qquad
z_i\star_{\gamma_u} z_j^* =\bar\lambda_{ij}\, z_j^*\star_{\gamma_u} z_i \;, \quad \mbox{ for all } i,j , 
$$
(and each $z_i$ is normal for the deformed product, since $\lambda_{ii}=1$), together with the conjugated relations, and a spherical relation
$$
\sum\nolimits_{i=0}^n z_i^* \star_\gamma z_i =1 \;.
$$
To use a more customary notation, consider the matrix $\Theta=(\theta_{ij})$ with components defined by $\lambda_{ij}=e^{2\pi\mathrm{i}\theta_{ij}}$. It is real (since $\lambda_{ij} \bar{\lambda}_{ij} = 1$), and antisymmetric (since $\bar\lambda_{ij}=\lambda_{ji}$).
We shall then denote by $\A(\IS^{2n+1}_{\Theta})$ the algebra $\A(\IS^{2n+1})$ with deformed product $\star_{\gamma}$. 

\begin{prop} 
The datum $ \big( \mathcal{A}(\IS^{2n+1}_{\Theta}), \cO(U(1)), \mathcal{A}(\CP^n) \big)$ is a noncommutative principal $U(1)$-bundle.
\end{prop}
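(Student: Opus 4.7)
The plan is to reduce the statement to the (unlabeled) twisting theorem proved just above, which transports principality of a $\IZ$-graded noncommutative bundle along a twist by a graded $*$-automorphism. The argument has two steps.

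First, I would verify that the undeformed datum $\big(\A(\IS^{2n+1}),\cO(U(1)),\A(\CP^n)\big)$ is itself a noncommutative principal $U(1)$-bundle by invoking Theorem~\ref{quapriseq}. The sphere relation $\sum_{i=0}^n z_i^* z_i = 1$ produces both pairs of finite sequences required: take $N=M=n+1$ and set $\xi_j=\beta_j:=z_{j-1}\in\A_1$ and $\eta_j=\alpha_j:=z_{j-1}^*\in\A_{-1}$, so that
\[
\sum_{j=1}^{n+1}\eta_j\xi_j=\sum_{j=1}^{n+1}\alpha_j\beta_j=\sum_{i=0}^{n} z_i^*z_i=1_{\A},
\]
which is precisely \eqref{fr1u}. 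This is the algebraic translation of the fact that the classical Hopf fibration $\IS^{2n+1}\to\CP^n$ is a principal $U(1)$-bundle.

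Second, by construction $\A(\IS^{2n+1}_\Theta)=(\A(\IS^{2n+1}),\star_{\gamma_u})$, where $\gamma_u$ is the graded unital $*$-automorphism induced by the diagonalized unitary matrix $(u_{ij})$. The twisted product restricts to the original one on the degree zero part: for $a,b\in\A_0$ one has $a\star_{\gamma_u}b=\gamma_u^{0}(a)\gamma_u^{0}(b)=ab$, so the fixed-point subalgebra of $\A(\IS^{2n+1}_\Theta)$ coincides as a unital $*$-algebra with $\A(\CP^n)$. Applying the twisting theorem above to $\A=\A(\IS^{2n+1})$ and $\gamma=\gamma_u$ then yields principality of $\big(\A(\IS^{2n+1}_\Theta),\cO(U(1)),\A(\CP^n)\big)$.

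There is essentially no main obstacle here, since the combinatorial heart of the argument is absorbed into the twisting theorem, whose proof transports the frames $\xi_j,\eta_j,\alpha_i,\beta_i$ along $\gamma_u^{\pm 1}$. The only items deserving explicit verification are that $\gamma_u$ is indeed a graded $*$-automorphism of $\A(\IS^{2n+1})$---immediate from unitarity of $(u_{ij})$, which preserves both the $\IZ$-grading and the sphere relation---together with the identification $\B_0=\A(\CP^n)$ noted above.
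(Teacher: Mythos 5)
Your proposal is correct and follows exactly the route the paper intends: the sphere relation $\sum_i z_i^*z_i=1$ furnishes the frame sequences of Theorem~\ref{quapriseq} for the commutative datum $\big(\A(\IS^{2n+1}),\cO(U(1)),\A(\CP^n)\big)$, and the twisting theorem then transports principality to $\A(\IS^{2n+1}_\Theta)=(\A(\IS^{2n+1}),\star_{\gamma_u})$ since $\gamma_u$ is a graded unital $*$-automorphism and $\B_0=\A_0$. No gaps.
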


With the same decomposition and notation as in \eqref{eq:Zd}, for any natural number $d$, consider the algebra
$$
\mathcal{A}(L^{2n+1}_{\theta}(d; \mv{1})) := \mathcal{A}(\IS^{2n+1}_{\Theta})^{\IZ_d} = 
\oplus_{n \in \IZ} \big(\mathcal{A}(\IS^{2n+1}_{\Theta})\big)_{dn} , 
$$  
which we think of as the coordinate algebra of the $\Theta$-deformed lens spaces. 
Clearly, for $d=1$ we get back the algebra $\mathcal{A}(\IS^{2n+1}_{\Theta})$.

\begin{prop} 
The datum $ \big( \mathcal{A}(L^{2n+1}_{\theta}(d; \mv{1})), \cO(U(1)), \mathcal{A}(\CP^n) \big)$ is a noncommutative principal $U(1)$-bundle.
\end{prop}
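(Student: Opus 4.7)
The plan is to obtain this proposition as a direct application of Proposition~\ref{p:quaprifin} with the preceding proposition as input. That preceding proposition already asserts that the triple $\big(\mathcal{A}(\IS^{2n+1}_{\Theta}), \cO(U(1)), \mathcal{A}(\CP^n)\big)$ is a noncommutative principal $U(1)$-bundle for the $\IZ$-grading in which $z_i$ has degree $+1$ and $z_i^*$ has degree $-1$. By definition,
$$
\mathcal{A}(L^{2n+1}_{\theta}(d; \mv{1})) = \mathcal{A}(\IS^{2n+1}_{\Theta})^{\IZ_d} = \bigoplus_{n\in\IZ} \big(\mathcal{A}(\IS^{2n+1}_{\Theta})\big)_{dn},
$$
which is exactly the $\IZ_d$-invariant subalgebra considered in \eqref{eq:Zd}. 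Its degree-zero component coincides with $\mathcal{A}(\CP^n)$. Hence Proposition~\ref{p:quaprifin} applies verbatim and yields the claim.

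If one unpacks the argument at the level of frames, the input needed for Theorem~\ref{quapriseq} on the $\theta$-sphere is the spherical relation. Since $\gamma$ is diagonal and unitary, a short computation gives $z_i^* \star_\gamma z_i = z_i^* z_i$, so that the sphere relation still reads $\sum_{i=0}^n z_i^* \star_\gamma z_i = 1$. Therefore taking $\xi_i = \beta_i = z_i$ in degree $+1$ and $\eta_i = \alpha_i = z_i^*$ in degree $-1$ furnishes the level-one frames required by Theorem~\ref{quapriseq}. Following the recipe indicated after Proposition~\ref{p:quaprifin}, the level-$d$ frames for the $\IZ_d$-invariant subalgebra are then the $d$-fold $\star_\gamma$-products $\xi_J = z_{j_1}\star_\gamma\cdots\star_\gamma z_{j_d}$ and $\eta_J = z_{j_d}^*\star_\gamma\cdots\star_\gamma z_{j_1}^*$, for which $\sum_J \eta_J \star_\gamma \xi_J = 1$ by iterating the spherical relation. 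These are precisely the level-$\pm 1$ frames once the grading of $\mathcal{A}(L^{2n+1}_{\theta}(d; \mv{1}))$ is renumbered by a factor of $d$.

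There is no genuine obstacle: the only point to check with care, namely that the $\star_\gamma$-product interacts correctly with the iterated spherical identity, is essentially automatic because $\gamma$ preserves the grading and is a $*$-automorphism, and the reduction to $\IZ_d$-invariants is entirely covered by the general statement of Proposition~\ref{p:quaprifin}.
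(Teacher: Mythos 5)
Your proof is correct and takes essentially the same route as the paper: the proposition is obtained by applying Proposition~\ref{p:quaprifin} to the triple $\big(\mathcal{A}(\IS^{2n+1}_{\Theta}),\cO(U(1)),\mathcal{A}(\CP^n)\big)$ established in the preceding proposition, using that $\mathcal{A}(L^{2n+1}_{\theta}(d;\mv{1}))$ is by definition the $\IZ_d$-invariant subalgebra of \eqref{eq:Zd}. Your additional frame-level verification (that $z_i^*\star_\gamma z_i=z_i^*z_i$ preserves the sphere relation and that the $d$-fold products give the required identities) is consistent with the constructions in Theorem~\ref{quapriseq} and Proposition~\ref{p:quaprifin}, though not needed once the general proposition is invoked.
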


Let $C(\CP^{n})$, $C(\IS^{2n+1}_{\theta})$ and $C(L^{2n+1}_{\theta}(d; \mv{1}))$ the universal enveloping \mbox{\Cs algebras} for each of the coordinate algebra and by $E_1$ 
the completion of the spectral subspace $\B_1$. 
Since the circle action extended to $C(\IS^{2n+1}_{\theta})$ has large spectral subspaces, the $d$-th spectral subspace $E_{d}$ agrees with $(E_{1})^{\ots d}$. 
With $*$-homomorphism $\phi: A_{0} \rightarrow \cK(E)$ just left multiplication we have:

\begin{prop}
For all integers $d \geq 1$, the \mbox{\Cs algebra} $C(L^{2n+1}_{\theta}(d; \mv{1}))$ is a Pimsner algebra over $C(\CP^n)$ for the Hilbert bimodule $E_{d}$.
\end{prop}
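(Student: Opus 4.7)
The plan is to apply Proposition~\ref{t:pimcirsub} to the graded $*$-algebra
$$
\mathcal{B} := \mathcal{A}(L^{2n+1}_{\theta}(d; \mv{1})) = \bigoplus_{m \in \IZ} \mathcal{B}_m \, ,
\qquad \mathcal{B}_m := \bigl(\mathcal{A}(\IS^{2n+1}_{\Theta})\bigr)_{dm} \, ,
$$
viewed with the residual $\IZ$-grading coming from the weighted $\IS^1$-action on $\mathcal{A}(\IS^{2n+1}_{\Theta})$ restricted through the quotient $\IS^1 \to \IS^1/\IZ_d \simeq \IS^1$. With this grading the degree zero part is $\mathcal{B}_0 = \mathcal{A}(\CP^n)$.

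The first step is to invoke the previous proposition, which asserts that the triple $\big(\mathcal{B}, \cO(U(1)), \mathcal{A}(\CP^n)\big)$ is a noncommutative principal $U(1)$-bundle. By the Corollary to Theorem~\ref{gradpr}, this is equivalent to $\mathcal{B}$ being \emph{strongly} $\IZ$-graded, i.e.\ $\mathcal{B}_k \mathcal{B}_m = \mathcal{B}_{k+m}$ for all $k,m\in\IZ$. In particular, the frame identities \eqref{fr1u} produced in the proof of Proposition~\ref{p:quaprifin} exhibit $\mathcal{B}_1 = \big(\mathcal{A}(\IS^{2n+1}_{\Theta})\big)_{d}$ and its dual $\mathcal{B}_{-1}$ as finitely generated and projective over $\mathcal{A}(\CP^n)$.

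The second step is to pass to the $C^*$-completion. The residual circle action on $\mathcal{B}$ is by $*$-automorphisms and so, by the universal property defining $C(L^{2n+1}_{\theta}(d; \mv{1}))$, extends to an isometric action in the sense of \eqref{eq:isomaction}. Lemma~\ref{l:denspe} then upgrades this to a strongly continuous $\IS^1$-action on $C(L^{2n+1}_{\theta}(d; \mv{1}))$ whose $m$-th spectral subspace is the norm closure of $\mathcal{B}_m$. Since the circle action on $C(\IS^{2n+1}_\theta)$ has large spectral subspaces, the closure of $\mathcal{B}_m=(\A(\IS^{2n+1}_\theta))_{dm}$ coincides with the bimodule $E_{dm}$ introduced before the proposition; in particular the first spectral subspace is exactly $E_d$. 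Strong gradedness survives the completion, so $E_d^* E_d = C(\CP^n) = E_d E_d^*$, i.e.\ the extended action has large spectral subspaces, and $E_d$ is a full, finitely generated (hence countably generated) Hilbert bimodule over $C(\CP^n)$ with left action $\phi(a)\xi = a\xi$.

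The final step is to apply Proposition~\ref{t:pimcirsub} (equivalently Theorem~\ref{t:pimcir}): since $\mathcal{B}$ is a strongly graded $*$-algebra satisfying the hypotheses of Lemma~\ref{l:denspe}, its $C^*$-closure $C(L^{2n+1}_{\theta}(d; \mv{1}))$ is generated as a $C^*$-algebra by its first spectral subspace $E_d$, and the map $S_\xi \mapsto \xi$ extends to an isomorphism $\cO_{E_d} \xrightarrow{\simeq} C(L^{2n+1}_{\theta}(d; \mv{1}))$. The only delicate point is the matching between the algebraic and $C^*$-algebraic pictures: one must check that the norm closures of the algebraic spectral subspaces $\mathcal{B}_m$ really do coincide with the bimodules $E_{dm}$ obtained from iterated tensor products of $E_d$, and this is ensured precisely because the enveloping action has large spectral subspaces, so $E_{dm} \simeq E_d^{\hot_\phi m}$ as Hilbert $C(\CP^n)$-bimodules.
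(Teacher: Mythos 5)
Your proposal is correct and follows the same route the paper intends: use the preceding proposition (principality, i.e.\ strong $\IZ$-gradedness, of the $\Theta$-deformed lens space algebra), pass to the universal $C^*$-completion via Lemma~\ref{l:denspe}, identify the first spectral subspace with $E_d$, and apply Proposition~\ref{t:pimcirsub} (Theorem~\ref{t:pimcir}). The paper leaves all of this implicit in the sentence preceding the statement, so your write-up is simply a more explicit version of the same argument.
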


\subsubsection*{Acknowledgment}
We thank Chiara Pagani for useful discussions and Bram Mesland for his 
feedback on an earlier version of the paper.
FD~was partially supported by UniNA and Compagnia di San
Paolo under the Program STAR 2013. GL was partially supported by the Italian Project Prin 2010-11 -- Operator Algebras, Noncommutative Geometry and Applications.

\end{document}